\newtheorem{thm}{Theorem}[section]
\newtheorem{prop}[thm]{Proposition}
\newtheorem{lem}[thm]{Lemma}
\newtheorem{cor}[thm]{Corollary}
\theoremstyle{definition}
\newtheorem{remark}[thm]{Remark}
\begin{document}

%

\newcommand\Aut{\operatorname{Aut}}
\newcommand\trdeg{\operatorname{trdeg}}
\newcommand\diag{\operatorname{diag}}
\newcommand\Diag{\operatorname{Diag}}
\newcommand\ed{\operatorname{ed}}
\newcommand\im{\operatorname{im}}
\newcommand\id{\operatorname{id}}
\newcommand\wt{\operatorname{wt}}
\newcommand\ind{\operatorname{ind}}
\newcommand\rk{\operatorname{rk}}
\newcommand\sign{\operatorname{sign}}
\newcommand\Spec{\operatorname{Spec}}
\newcommand\Spin{\operatorname{Spin}}
\newcommand\SO{\operatorname{SO}}
\newcommand\Mat{\operatorname{M}}
\newcommand\Ima{\operatorname{Im}}
\newcommand\Char{\operatorname{char}}
\newcommand\Gal{\operatorname{Gal}}
\newcommand\rank{\operatorname{rank}}
\newcommand\Ker{\operatorname{Ker}}
\newcommand\Cok{\operatorname{Coker}}

\newcommand\GL{\operatorname{GL}}
\newcommand\SL{\operatorname{SL}}
\newcommand\PGL{\operatorname{PGL}}
\newcommand\PGLn{\operatorname{PGL}_n}
\newcommand\Sym{\operatorname{S}}
\newcommand\Alt{\operatorname{A}}
\newcommand\Symn{\operatorname{S}_n}
\newcommand\Altn{\operatorname{A}_n}
\newcommand\Stab{\operatorname{Stab}}
\newcommand\End{\operatorname{End}}
\newcommand\Span{\operatorname{Span}}
\newcommand\bbC{\mathbb{C}}
\newcommand\ZZ{\mathbb{Z}}
\newcommand\bbZ{\mathbb{Z}}
\newcommand\bbG{\mathbb{G}}
\newcommand\bbQ{\mathbb{Q}}

\title[Essential dimension]{The essential dimension of the normalizer
of a maximal torus in the projective linear group}
\author[Aurel Meyer]{Aurel Meyer$^\dagger$}
\thanks{$^\dagger$ Aurel Meyer was partially supported by
a University Graduate Fellowship at the University of British Columbia}
\author[Zinovy Reichstein]{Zinovy Reichstein$^{\dagger \dagger}$}
\thanks{$^{\dagger \dagger}$ Z. Reichstein was partially supported by
NSERC Discovery and Accelerator Supplement grants}

\address{Department of Mathematics, University of British Columbia,
Vancouver, BC V6T 1Z2, Canada}
\subjclass{11E72, 20D15, 16K20} 


\keywords{Essential dimension, central simple algebra,
character lattice, finite $p$-group, Galois cohomology}

\begin{abstract} 
Let $p$ be a prime, $k$ be a field of characteristic $\neq p$ 
containing a primitive $p$th root of unity and $N$ be the normalizer 
of the maximal torus in the projective linear group $\PGLn$.
We compute the exact value of the essential dimension
$\ed_k(N; p)$ of $N$ at $p$ for every $n \ge 1$.
\end{abstract}

\maketitle
\tableofcontents

\section{Introduction}

Let $p$ be a prime, $k$ be a field of characteristic $\neq p$ 
and $N$ be the normalizer of a split maximal torus 
in the projective linear group $\PGL_n$, for some integer $n$. 
The purpose of this paper is to compute the essential 
dimension $\ed_k(N; p)$ of $N$ at $p$. For the definition 
of essential dimension of an algebraic group (and more generally, 
of a functor), we refer the reader 
to~\cite{reichstein2}, \cite{bf}, \cite{brv} or~\cite{merkurjev}.
As usual, if the reference to $k$ is clear from the context, we
will sometimes write $\ed$ in place of $\ed_k$.

We begin by explaining why we are interested in the essential 
dimension of $N$. One of the central problems in the theory 
of essential dimension is to find the exact value of 
the essential dimension of the projective linear group $\PGL_n$ 
or equivalently, of the functor
\begin{eqnarray*} 
\text{$H^1( \, \ast \, , \PGL_n) \colon K \mapsto \{$ degree $n$
central simple algebras $A/K$,} \\
\text{up to $K$-isomorphism $\}$,} 
\end{eqnarray*} 
where $K$ is a field extension of $k$.
This problem arises naturally in the theory of central simple 
algebras. To the best of our knowledge, it was first raised 
by C.~Procesi, who showed (using different terminology)
that $\ed(\PGL_n) \leq n^2$; see~\cite[Theorem 2.1]{procesi}.  
This problem, and the related question of computing the
relative essential dimension $\ed(\PGL_n; p)$ at a prime $p$, 
remain largely open. The best currently known lower bound, 
\[ \ed(\PGL_{p^r}; p) \ge 2r \]
(cf.~\cite[Theorem 16.1(b)]{reichstein1} or \cite[Theorem 8.6]{ry}), 
falls far below the best known upper bound, 
\begin{equation} \label{e.upper-bound}
\ed(\PGL_n) \le \begin{cases} 
\text{$\frac{(n-1)(n-2)}{2}$, for every odd $n \ge 5$ and} \\ 
\text{$n^2 - 3n + 1$, for every $n \ge 4$;}  
\end{cases} 
\end{equation}
see~\cite{lr}, \cite[Theorem 1.1]{lrrs},~\cite[Proposition 1.6]{lemire} 
and~\cite{ff}. 

We remark that the primary decomposition theorem reduces 
the computation of $\ed(\PGL_n; p)$ to the case where $n$ 
is a power of $p$.  That is, if $n = p_1^{r_1} \ldots p_s^{r_s}$
then $\ed(\PGL_n; p_i) = \ed(\PGL_{p_i^{r_i}}; p_i)$.
The computation of $\ed(\PGL_n)$ also
partially reduces to the prime power case, because 
\[ \ed(\PGL_{p^i}) \le \ed(\PGL_n) \le \ed(\PGL_{p_1^{r_1}}) + \ldots + 
\ed(\PGL_{p_s^{r_s}}) \]
for every $i = 1, \dots, s$; cf.~\cite[Proposition 9.8]{reichstein2}. 

It is important to note that the proofs of the upper 
bounds~\eqref{e.upper-bound} are not based on a direct 
analysis of the functor $H^1(\, \ast \, , \PGL_n)$.
Instead, one works with the related functor
\[ \text{$H^1( \, \ast \, , N) \colon K \mapsto \{$ $K$-isomorphism classes 
of pairs $(A, L)$ $\}$,} \]
where $K$ is a field extension of $k$, $A$ is a degree $n$ 
central simple algebra over $K$, $L$ is a maximal 
\'etale subalgebra of $A$, and $N$ is the normalizer of a (split)
maximal torus in $\PGL_n$. This functor is often more accessible 
than $H^1( \, \ast \, , \PGL_n)$ because many 
of the standard constructions in the theory of central simple algebras 
depend on the choice of a maximal subfield $L$ in a given central 
simple algebra $A/K$. Projecting a pair $(A, L)$ to 
the first component, we obtain a surjective morphism of functors
$H^1( \, \ast \,, N) \to H^1(\, \ast \, , \PGL_n)$. The surjectivity
of this morphism (which is a special case of a more general result 
of T. Springer (see~\cite[III.4.3, Lemma 6]{serre-gc}) 
leads to the inequalities 
\begin{equation}
\label{e.N}
\text{$\ed(N) \ge \ed(\PGL_n)$ and $\ed(N; p) \ge \ed(\PGL_n; p)$} \, ;  
\end{equation}
see~\cite[Proposition 1.3]{merkurjev}, \cite[Lemma 1.9]{bf} or
\cite[Proposition 4.3]{reichstein2}.
The inequalities~\eqref{e.upper-bound} were, in fact, proved 
as upper bounds on $\ed(N)$; see~\cite{lrrs} and~\cite{lemire}.  
It is thus natural to try to determine the exact values of 
$\ed(N)$ and $\ed(N; p)$.
In addition to being of independent interest, these 
numbers represent a limitation on the techniques used 
in~\cite{lrrs} and~\cite{lemire}. This brings us to 
the main result of this paper.

\begin{thm} \label{thm1}
Let $N$ the normalizer of a maximal torus in the projective linear 
group $\PGL_n$ defined over a field $k$. 
Assume $\Char(k) \ne p$ and $k$ contains a primitive
$p$th root of unity.  Then

\smallskip
\begin{tabular}{lll}
(a) & $\ed_k(N; p) = [n/p]$, &\quad if $n$ is not divisible by $p$. \\
(b) & $\ed_k (N; p) = 2$, &\quad if $n=p$. \\
(c) & $\ed_k (N; p) = n^2/p - n + 1$, &\quad if $n=p^r$ for some $r\ge 2$. \\
(d) & $\ed_k (N; p) = p^e(n-p^e)-n+1$, &\quad in all other cases.
\end{tabular}

\smallskip
\noindent
Here $[n/p]$ denotes the integer part of $n/p$ and
$p^e$ denotes the highest power of $p$ dividing $n$.
\end{thm}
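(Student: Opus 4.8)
The plan is to sandwich $\ed_k(N;p)$ between an explicit upper bound and a Karpenko--Merkurjev-type lower bound, after first replacing $N$ by a subgroup whose component group is a $p$-group. Write $1 \to T \to N \to \Symn \to 1$, let $P \subseteq \Symn$ be a Sylow $p$-subgroup and $\widetilde N \subseteq N$ its preimage, so that $1 \to T \to \widetilde N \to P \to 1$ is exact and $[N:\widetilde N]$ is prime to $p$; hence $\ed_k(N;p) = \ed_k(\widetilde N;p)$ and we may work with $\widetilde N$. Concretely $\widetilde N$ is the image in $\PGLn$ of the group $\widetilde N^{\ast} := D \rtimes \widehat P$ of monomial matrices supported on $\widehat P$, where $D \cong \mathbb{G}_m^{\,n}$ is the diagonal torus of $\GL_n$ and $\widehat P$ realizes $P$ by permutation matrices; thus $\widetilde N = \widetilde N^{\ast}/\mathbb{G}_m$ and $1 \to \mathbb{G}_m \to \widetilde N^{\ast} \to \widetilde N \to 1$ is a central extension. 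The combinatorial object that governs everything is the permutation $P$-lattice $\mathbb{Z}^n$, its trivial rank-one quotient $\mathbb{Z}^n \to \mathbb{Z}$, and the kernel of the latter, which is the character lattice $\widehat T$ of $T$.

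For the upper bound, observe that a class in $H^1(K,\widetilde N)$ is, after a base extension of degree prime to $p$, a pair: a $P$-torsor --- equivalently an \'etale algebra with a $P$-structure --- which accounts for $\ed_k(P;p) = \ed_k(\Symn;p) = [n/p]$ parameters, together with a class in $H^1(K,{}_{\xi}T)$ for the associated twisted torus ${}_{\xi}T$. I would bound $\ed_k({}_{\xi}T;p)$ uniformly in $\xi$ by choosing a $P$-equivariant exact sequence $0 \to \widehat T \to \Lambda \to \Lambda/\widehat T \to 0$ with $\Lambda$ a permutation $P$-lattice, which realizes ${}_{\xi}T$ as a subtorus of a quasi-split torus and gives $\ed_k({}_{\xi}T;p) \le \rank\Lambda - (n-1)$, hence $\ed_k(\widetilde N;p) \le \rank\Lambda - (n-1) + [n/p]$; the crux is to make this bound sharp. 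The four cases of the theorem correspond to the minimal admissible $\rank\Lambda$: when $p\nmid n$ the twisted tori are split by extensions of degree prime to $p$, so they contribute nothing and the answer is $[n/p]$; when $n=p$ one has a rank-$n$ permutation $\Lambda$ and obtains $1+1=2$; for $n=p^r$ and in the mixed case one must exploit the iterated wreath structure of $P$ on $\widehat T$, and the exponent $p^{e}$ enters here. Equivalently, the upper bound can be produced by writing down a $p$-generically free representation of $\widetilde N$ of the stated dimension as a direct sum of representations induced up to $\widetilde N^{\ast}$ from characters of $D$ that are trivial on the scalars.

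For the lower bound, cases (a) and (b) are easy: $\ed_k(N;p) \ge \ed_k(\Symn;p) = [n/p]$ since $N \twoheadrightarrow \Symn$ has special kernel $T$, and $\ed_k(N;p) \ge \ed_k(\PGL_p;p) = 2$ by~\eqref{e.N}. The substance is in (c) and (d), and for these I would invoke the Karpenko--Merkurjev theorem and its refinements for algebraic groups with a central subgroup of multiplicative type, applied to the central extension $1 \to \mathbb{G}_m \to \widetilde N^{\ast} \to \widetilde N \to 1$: a versal $\widetilde N$-torsor has an obstruction class in $H^2(\,\cdot\,,\mathbb{G}_m)$ to lifting to $\widetilde N^{\ast}$, and bounding from below the index of this generically chosen Brauer class --- which one translates into computing the smallest dimension of a representation of $\widetilde N^{\ast}$ that is faithful on the central $\mathbb{G}_m$, equivalently the smallest permutation $P$-lattice surjecting onto $\widehat T$ --- gives a lower bound designed to meet the upper bound of the previous paragraph.

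The main obstacle is this last point in cases (c) and (d): extracting the exact values $n^{2}/p - n + 1$ and $p^{e}(n-p^{e}) - n + 1$ requires pinning down how coarse a $P$-equivariant permutation resolution of $\widehat T$ can be --- equivalently, how small a representation of the monomial group $\widetilde N^{\ast}$ that stays faithful on the center can be --- and this interacts delicately with the iterated wreath-product structure of the Sylow subgroup $P$ of $\Symn$ and with the $p$-adic valuation $p^{e}$ of $n$, which is precisely why the answer breaks into four cases. A secondary technicality is to verify that the representation produced for the upper bound is not merely $p$-faithful but $p$-generically free, which needs an analysis of generic stabilizers, and that the resulting upper and lower bounds in fact coincide.
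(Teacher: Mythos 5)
Your high-level architecture is the same as the paper's: pass to the preimage $\widetilde N = T\rtimes P$ of a Sylow $p$-subgroup of $\Symn$ (prime-to-$p$ index), prove the upper bound by exhibiting a small generically free monomial representation, handle the lower bounds in (a),(b) by the section $\Symn\hookrightarrow N$ and by $\ed(\PGL_p;p)\ge 2$, and get the lower bounds in (c),(d) from Karpenko--Merkurjev. But as written the proposal is a roadmap that stops exactly where the content begins, and both halves have genuine gaps. On the upper bound: you never exhibit the $P_n$-invariant generating set $\Lambda\subset X(T)$ of the required size, which is the whole point of cases (c) and (d). (The paper takes, for $n=p^r$, the single $P_n$-orbit of ${\bf a}_{1,p^{r-1}+1}$, i.e.\ all differences of coordinate characters whose indices lie in cyclically adjacent blocks of length $p^{r-1}$; this has exactly $p^r\cdot p^{r-1}=p^{2r-1}$ elements, and generic freeness is verified by checking that the center of $P_n$ acts nontrivially on $\Ker(\bbZ[\Lambda]\to X(T))$.) Moreover your intermediate inequality $\ed(\widetilde N;p)\le \ed(P;p)+\sup_\xi\ed({}_\xi T;p)$ is not a general theorem about fibrations of $H^1$'s and is not justified; the paper avoids it by building one generically free representation of the whole semidirect product. (Also, to embed a torus in a quasi-split one you need a permutation lattice \emph{surjecting onto} $\widehat T$, not containing it; your exact sequence points the wrong way.)

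On the lower bound in (c),(d), there are two problems. First, Karpenko--Merkurjev as available here applies to finite $p$-groups, and neither $\widetilde N$ nor your central extension $\widetilde N^{\ast}$ is finite; the refinement for extensions of $p$-groups by tori that you allude to is neither stated nor proved. The paper instead restricts to the finite $p$-group $T_{(q)}\rtimes P_n$ (with $q=p$ for odd $p$ and $q=4$ for $p=2$ --- the larger torsion is needed to make the base case of the induction work), and the ``$-n+1$'' in the answer arises precisely from the subgroup inequality $\ed(G;p)\ge\ed(H;p)+\dim H-\dim G$ in this reduction; your Brauer-index formulation would have to account for this term by some other mechanism, which you do not supply. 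Second, and more importantly, the actual combinatorial heart --- that every $P_n$-invariant generating subset of the mod-$q$ character group has at least $p^{2r-1}$ (resp.\ $p^{e}(n-p^{e})$) elements --- is exactly what you label ``the main obstacle'' and leave unresolved. The paper proves it by induction on $r$: apply the block-summation map $\Sigma\colon X_{p^r}\to X_{p^{r-1}}$, discard the image elements lying in $pX_{p^{r-1}}$ (Nakayama's lemma shows the rest still generate), and observe that each surviving image element, having at least two block-sums prime to $p$, has at least $p^{2}$ preimages in $\Lambda$ under the block-wise cyclic shifts. Without this argument, or a substitute for it, the theorem is not proved.
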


In each part we will prove an upper bound and a lower bound on
$\ed(N)$ separately, using rather different techniques.
There is nothing about the methods we use that 
in any way guarantees that the lower bounds should
match the upper bounds, thus yielding an exact value of 
$\ed(N; p)$. The fact that this happens, under the rather mild 
requirements on $k$ imposed in the statement of 
Theorem~\ref{thm1}, may be viewed as a lucky coincidence. 

If the assumptions on $k$ are further relaxed, the upper and lower
bounds no longer match; however, some of our arguments still go through. 
In particular, all lower bounds remain valid
(i.e., $\ed_k(N)$ remains greater than or equal to 
the values specified in the theorem) over any field
$k$ of characteristic $\ne p$. 
The upper bounds on $\ed_k(N; p)$ in parts (c) and (d) are valid 
over an arbitrary field $k$ of any characteristic.

A quick glance at the statement of Theorem~\ref{thm1} shows that,
unlike in the case of $\PGLn$, the computation of $\ed(N; p)$
does not reduce to the case where $n$ is a power of $p$. On 
the other hand, the proof of part (c), where $n = p^r$ and $r \ge 2$, 
requires the most intricate arguments.
Another reason for our special interest in part (c)
is that it leads to a new upper bound on
$\ed(\PGL_n; p)$. More precisely, combining the upper bound 
in part (c) with~\eqref{e.N}, and remembering that
the upper bound in part (c) does not require any assumption 
on the ground field $k$, we obtain the following inequality.

\begin{cor} \label{cor1} Let $n = p^r$ be a prime power.  Then
\[ \ed_k(\PGL_n; p) \le p^{2r - 1} - p^r + 1 \]  
for any field $k$ and for any $r \ge 2$.
\qed
\end{cor}

Corollary~\ref{cor1} fails for $r = 1$ because 
\begin{equation} \label{e.pgl_p}
\ed_k(\PGL_p; p) \ge 2, 
\end{equation}
see~\cite[Corollary 5.7]{reichstein2} or~\cite[Lemma 8.5.7]{ry}.
(If $k$ has a primitive $p$th root of unity then in fact 
equality holds but we won't use this in the sequel.)
For $r = 2$, Corollary~\ref{cor1} is valid but is not optimal. 
Indeed, in this case L.~H.~Rowen and D.~J.~Saltman showed that,
after a prime-to-$p$ extension $L/K$, every degree $p^2$ central 
simple algebra $A/K$ becomes a $(\bbZ/p \bbZ)^2$-crossed product; 
see~\cite[Corollary 1.3]{rs}. 
The upper bound on the essential dimension of a crossed 
product given by \cite[Corollary 3.10]{lrrs} then yields 
the inequality
\[ \ed(\PGL_{p^2}; p) \le p^2 + 1 \, , \]
which is stronger than Corollary~\ref{cor1} for any $p \ge 3$.
If $r \ge 3$ we do not know how close the true value 
of $\ed(\PGL_{p^r}; p)$ is to $\ed(N; p) = p^{2r-1} - p^r + 1$;
in this case Corollary~\ref{cor1} gives the best currently known 
upper bound on $\ed(\PGL_{p^r}; p)$.  We remark that, beyond 
the obvious inequality $\ed(\PGL_{p^r}; p) \le \ed(\PGL_{p^r})$,
the relationship between $\ed(\PGL_{p^r}; p)$ and $\ed(\PGL_{p^r})$ 
is quite mysterious as well.

A key ingredient in our proofs 
of the lower bounds in Theorem~\ref{thm1}(c) and (d) is 
a recent theorem of N.~A.~Karpenko and A.~S.~Merkurjev~\cite{km} 
on the essential dimension of a $p$-group, stated 
as Theorem~\ref{thm.km} below. To the best of our knowledge, 
these results were not accessible by previously existing 
techniques.  Corollary~\ref{cor1} and the other parts 
of Theorem~\ref{thm1} do not rely on the Karpenko-Merkurjev 
theorem.

\section{A general strategy}
\label{sect1.5}

Let $G$ be an algebraic group defined over a field $k$.
Recall that the action of $G$ on an 
algebraic variety $X$ defined over $k$ is generically 
free if the stabilizer subgroup $\Stab_G(x)$ is trivial
for $x \in X(\overline{k})$ in general position.

\begin{remark} \label{rem1.5-1}
If $G$ is a finite constant group and $X$ is irreducible and
smooth then the $G$-action on $X$ is generically free if 
and only if it is faithful. 

Indeed, the ``only if" implication is obvious.
Conversely, if the $G$-action on $X$ is faithful
then $\Stab_G(x) = \{ 1 \}$ for any $x$ outside 
of the closed subvariety 
$\bigcup_{1\ne g \in G} X^{\langle g \rangle}$,
whose dimension is $\le \dim(X)$. 
\qed
\end{remark}

In the course of this paper we will repeatedly encounter the following 
situation.  Suppose we want to show that 
\begin{equation} \label{e.strategy1}
\ed(G) = \ed_k(G; p) = d \, , 
\end{equation}
where $k$ is a field and $G$ is a linear algebraic group defined over $k$.

\smallskip
All such assertions will be proved by the following $2$-step procedure.

\smallskip
(i) Construct a generically free
linear representation of $G$ 
of dimension $d + \dim(G)$. This implies that $\ed_k(G) \le d$; 
see \cite[Theorem 3.4]{reichstein2} or
\cite[Proposition 4.11]{bf}.

\smallskip
(ii) Prove the lower bound $\ed_k(G; p) \ge d$. 

\medskip
Since clearly $\ed(G; p) \le \ed(G)$, the desired 
equality~\eqref{e.strategy1} follows from (i) and (ii).  

\smallskip
The group $G$ will always be of the form $G = D \rtimes F$,
where $D$ is diagonalizable and $F$ is finite. In the next section 
we will recall some known facts about representations of 
such groups. This will help us in carrying out step (i)
and, in the most interesting cases, step (ii) as well, 
via the Karpenko-Merkurjev Theorem~\ref{thm.km}.

\section{Representation-theoretic preliminaries}
\label{sect2}

We will work over a ground field $k$ which remains fixed throughout.
Suppose that a linear algebraic $k$-group $G$ contains
a diagonalizable (over $k$) 
group $D$ and the quotient $G/D$ is a constant finite group $F$. 
Here by ``diagonalizable over $k$" we mean that $D$ is a subgroup 
of the split torus $\bbG_m^d$ defined over $k$ or,
equivalently, that every linear representation of $D$
defined over $k$ decomposes as a direct sum of 1-dimensional 
subrepresentations.

Denote the group of (multiplicative) characters of $D$ by $X(D)$. Note
that since $D$ is diagonalizable over $k$, every multiplicative 
character of $D$ is defined over $k$.
Consider a linear $k$-representation $G \to \GL(V)$.  
Restricting this representation to $D$, 
we decompose $V$ into a direct sum of 1-dimensional character 
spaces.  Let $\Lambda \subset X(D)$ be the set of characters (weights)
of $D$ which occur in this decomposition. 
Note that here $|\Lambda| \le \dim(V)$, and equality holds
if and only if each character from $\Lambda$ occurs in $V$ 
with multiplicity $1$. The finite group  
$F$ acts on $X(D)$ and $\Lambda$ is invariant under this action. 
Moreover, if the $G$-action (and hence, the $D$-action)
on $V$ is generically free then $\Lambda$ generates $X(D)$
as an abelian group. In summary, we have proved the following 
lemma; cf. \cite[Section 8.1]{serre-rep}.

\begin{lem} \label{lem.characters}
Suppose every $F$-invariant generating set $\Lambda$ of $X(D)$ 
contains $\ge d$ elements. If $G \to \GL(V)$ is a generically 
free $k$-representation of $G$ then $\dim(V) \ge d$. 
\qed
\end{lem}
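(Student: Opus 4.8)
The plan is to make precise the three observations sketched in the paragraph preceding the statement, and then assemble them. Suppose $\rho\colon G \to \GL(V)$ is a generically free $k$-representation. First I would restrict $\rho$ to the diagonalizable subgroup $D$. Since $D$ is diagonalizable over $k$, the representation $V|_D$ decomposes as a direct sum of one-dimensional subspaces, each on which $D$ acts through a character $\chi \in X(D)$; let $\Lambda \subseteq X(D)$ be the set of characters that actually occur. Counting dimensions gives $\dim(V) \ge |\Lambda|$, with equality exactly when every occurring character appears with multiplicity one. So the entire lemma reduces to showing that $\Lambda$ is an $F$-invariant generating set of $X(D)$; then the hypothesis forces $|\Lambda| \ge d$, hence $\dim(V) \ge d$.

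Next I would check $F$-invariance of $\Lambda$. The quotient map $G \to F$ lets $F$ act on the normal subgroup $D$ by conjugation (well-defined up to inner automorphisms of $D$, which are trivial since $D$ is abelian), and this induces an action of $F$ on $X(D)$. Concretely, for $f \in F$ represented by $g \in G$, the element $g$ carries the $\chi$-weight space of $V|_D$ isomorphically onto the $(f\cdot\chi)$-weight space; hence $\chi \in \Lambda$ implies $f\cdot\chi \in \Lambda$. This is the routine part.

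The remaining point — and the one requiring the generic-freeness hypothesis — is that $\Lambda$ generates $X(D)$ as an abelian group. Let $X(D)' \subseteq X(D)$ be the subgroup generated by $\Lambda$, and let $D' \subseteq \bbG_m^{\,m}$ be the diagonalizable group whose character group is the quotient $X(D)/X(D)'$; equivalently, $D'$ is the intersection of the kernels of all $\chi \in \Lambda$. Every $\chi \in \Lambda$ is trivial on $D'$ by construction, so $D'$ acts trivially on each weight space, hence trivially on all of $V$. But a generically free action has trivial generic stabilizer, so the kernel of the $D$-action on $V$ is trivial; therefore $D'$ is trivial, which means $X(D)/X(D)' = 0$, i.e. $\Lambda$ generates $X(D)$. (If one prefers not to invoke generic freeness of the $G$-action directly, note it implies the $D$-action is generically free, hence faithful, and a faithful action of a diagonalizable group has weights generating the character lattice.) Combining the three steps yields $\dim(V) \ge |\Lambda| \ge d$. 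The only mild subtlety is the bookkeeping around the $F$-action on $X(D)$ being well-defined, but since $D$ is commutative this causes no real difficulty; the substantive input is the passage from generic freeness to faithfulness to the generating property of $\Lambda$.
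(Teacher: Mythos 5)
Your argument is correct and is essentially the paper's own proof, which is given in the paragraph preceding the lemma statement: decompose $V|_D$ into weight spaces to get $\dim(V)\ge|\Lambda|$, observe that $\Lambda$ is $F$-invariant, and use generic freeness (hence faithfulness) of the $D$-action to conclude that $\Lambda$ generates $X(D)$. Your write-up merely fills in the routine details (the conjugation action on weights and the passage through the common kernel $D'$ of the occurring characters), all of which are consistent with what the paper leaves implicit.
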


As we explained in the previous section, we are 
interested in constructing low-dimensional generically 
free representations of $G$. In this section we will prove simple
sufficient conditions for generic freeness for two particular 
families of representations.

\begin{lem} \label{lem.gen-free0}
Let $W$ be a faithful representation of $F$ and
$V$ be a representation of $G$ whose restriction to $D$ 
is generically free. Then $V \times W$ is 
a generically free representation of $G$.
\end{lem}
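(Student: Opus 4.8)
The plan is to show that the $G$-action on $V \times W$ is generically free by exhibiting a dense open subset of $V \times W$ on which all stabilizers are trivial. Recall that $G = D \rtimes F$ (or at least fits in an extension $1 \to D \to G \to F \to 1$ with $F$ finite constant), so any $g \in G$ projects to an element $\bar g \in F$. The key observation is that if $g$ fixes a point $(v, w)$, then in particular $\bar g$ fixes $w \in W$; since $W$ is a faithful representation of the finite group $F$, the locus of $w$ with nontrivial $F$-stabilizer is a proper closed subset (a finite union of proper linear subspaces $W^{\langle \bar g \rangle}$ over $1 \ne \bar g \in F$), so for $w$ in a dense open $F$-invariant subset $W^\circ \subset W$ we have $\Stab_F(w) = 1$. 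This forces $\bar g = 1$, i.e. $g \in D$.

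Once we know $g \in D$, the condition that $g$ fixes $(v,w)$ reduces to $g$ fixing $v \in V$. So the plan is: first restrict to $V^\circ \times W^\circ$, where $V^\circ \subset V$ is the dense open subset on which $D$ acts with trivial stabilizer (this exists by hypothesis, as $V|_D$ is generically free), and $W^\circ$ is as above. I would then argue that $V^\circ$ can be chosen $G$-invariant — or at least, pass to the intersection of its $F$-translates, which is still dense open since $F$ is finite — and similarly arrange $W^\circ$ to be $F$-invariant (which it already is, being a union of the $F$-invariant subvariety complement). Then $V^\circ \times W^\circ$ is a dense open $G$-invariant subset of $V \times W$, and for $(v,w)$ in it, $\Stab_G(v,w) \subseteq \Stab_F(w)$-preimage $\cap\, \Stab_D(v)$; the first containment gives $\Stab_G(v,w) \subseteq D$ and then $\Stab_D(v) = 1$ finishes the job.

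The one point requiring a little care — and the likely main obstacle, though a minor one — is the interplay between "generically free" as a statement about points in general position and the need for a genuinely $G$-invariant open set on which stabilizers vanish identically. Concretely: the hypothesis gives a dense open $V^\circ$ with trivial $D$-stabilizers, but a priori this open set need not be $F$-stable, and an element $g \in G \setminus D$ acts on $V$ in a way that mixes the $D$-action with the $F$-action. I would handle this by replacing $V^\circ$ with $\bigcap_{f \in F} f \cdot V^\circ$ (a finite intersection of dense opens, hence dense open) and noting that the property "$\Stab_D(v) = 1$" is preserved under the $G$-action up to conjugation in $D$ — since $D$ is normal, $\Stab_D(gv) = g\, \Stab_D(v)\, g^{-1}$, so triviality is genuinely $G$-invariant. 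After this normalization the argument above goes through cleanly.

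Alternatively, and perhaps more transparently, one can phrase everything pointwise: take $(v,w)$ with $v$ in general position in $V$ and $w$ in general position in $W$; any $g$ fixing $(v,w)$ has $\bar g \in \Stab_F(w) = 1$ so $g \in D$, whence $g \in \Stab_D(v) = 1$. This avoids fussing over invariant open sets and is probably the cleanest way to write it up, invoking Remark~\ref{rem1.5-1}-style reasoning only implicitly. I expect no serious difficulty here; this is a standard "product with a faithful $F$-representation" lemma and the proof is short.
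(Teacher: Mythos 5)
Your proposal is correct and its ``pointwise'' version at the end is exactly the paper's proof: for $w$ in general position $\Stab_G(w)=D$ (via Remark~\ref{rem1.5-1} applied to the faithful $F$-action on $W$), and then for $v$ in general position $\Stab_G(v,w)=\Stab_D(v)=\{1\}$. The extra care you take with $G$-invariant open sets is harmless but unnecessary, since generic freeness only requires trivial stabilizers on some dense open subset.
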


Here we view $W$ as a representation of $G$
via the natural projection $G \to G/D = F$.

\begin{proof} For $w \in W(\overline{k})$ in general position, 
$\Stab_{G}(w) = D$; cf. Remark~\ref{rem1.5-1}.
Choosing $v$ in general position in $V(\overline{k})$, 
we see that
\[ \Stab_{G}(v, w) = \Stab_{G}(v) 
\cap \Stab_{G}(w)= \Stab_D(v) = \{ 1 \} \, . \]
\end{proof}

From now on we will assume that 
$G = D \rtimes F$ is the semidirect product of $D$ and $F$.
In this case, given an $F$-invariant generating set 
$\Lambda \subset X(D)$, we can construct a linear (in fact, a {\em monomial})
$k$-representation $V_{\Lambda}$ of $G$ so that each character
from $\Lambda$ occurs in $V_{\Lambda}$ exactly once. To do this,
we associate a basis element $v_{\lambda}$ to each
$\lambda \in \Lambda$.  The finite group $F$ acts on
\[ V_{\Lambda} = \Span(v_{\lambda} \, | \, \lambda \in \Lambda) \]
by permuting these basis elements in the natural way, i.e., via
\begin{equation} \label{e.P_n}
\sigma \colon v_{\lambda} \mapsto v_{\sigma(\lambda)} \, .
\end{equation}
for any $\sigma \in F$ and any $\lambda \in \Lambda$.
The diagonalizable group $D$-acts by the character $\lambda$
on each 1-dimensional space $\Span(v_{\lambda})$, i.e., via
\begin{equation} \label{e.T}
t \colon v_{\lambda} \mapsto \lambda(t) v_{\lambda}
\end{equation}
for any $t \in D$ and $\lambda \in \Lambda$. Extending
\eqref{e.P_n} and \eqref{e.T} linearly to all of $V_{\Lambda}$, we
obtain a linear representation $G = D \rtimes F \to \GL(V_{\Lambda})$.
Note that by our construction $\dim(V_{\Lambda}) = |\Lambda|$.

Our second criterion for generic freeness is 
a variant of~\cite[Lemma 3.1]{lr} 
or~\cite[Proposition 2.1]{lemire}. For the sake of completeness 
we outline a characteristic-free proof.

\begin{lem} \label{lem.gen-free}
Let $\Lambda$ be an $F$-invariant subset of $X(D)$
and $\phi \colon \bbZ[\Lambda] \to X(D)$ be the natural morphism
of $\bbZ[F]$-modules, taking $\lambda \in \Lambda$ to itself.
Let $V_{\Lambda}$ be the linear representation of
$G = D \rtimes F$ defined by~\eqref{e.P_n} and~\eqref{e.T}, as
above. The $G$-action on $V_{\Lambda}$
is generically free if and only if

\smallskip
(a) $\Lambda$ spans $X(D)$ (or equivalently, $\phi$ is surjective) and

\smallskip
(b) the $F$-action on $\Ker(\phi) $ is faithful.
\end{lem}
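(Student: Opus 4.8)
The plan is to analyze the stabilizer in general position of the $G$-action on $V_\Lambda$ and translate the condition of triviality into the two stated lattice-theoretic conditions (a) and (b). First I would observe that the ``only if'' direction for (a) is immediate: the weights of $D$ occurring in $V_\Lambda$ are exactly the elements of $\Lambda$, so if $\Lambda$ does not span $X(D)$ then the intersection $\bigcap_{\lambda \in \Lambda} \ker(\lambda) \subset D$ is a nontrivial subgroup fixing every point of $V_\Lambda$, and generic freeness fails. (More precisely, this intersection is the diagonalizable group with character group $X(D)/\langle \Lambda \rangle$, which is nontrivial exactly when $\Lambda$ fails to span.) So henceforth I may assume (a) holds, i.e.\ $\phi$ is surjective.

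Next I would set up coordinates. Write a general point of $V_\Lambda$ as $v = \sum_{\lambda \in \Lambda} c_\lambda v_\lambda$ with all $c_\lambda \ne 0$; this is the generic situation. A general element of $G = D \rtimes F$ has the form $g = t\sigma$ with $t \in D$, $\sigma \in F$, and by~\eqref{e.P_n} and~\eqref{e.T} it acts by $g \colon v_\lambda \mapsto \lambda(t)\, v_{\sigma(\lambda)}$, hence $g \cdot v = \sum_\lambda c_\lambda \lambda(t) v_{\sigma(\lambda)} = \sum_\mu c_{\sigma^{-1}(\mu)} (\sigma^{-1}\mu)(t) v_\mu$. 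So $g$ fixes $v$ iff $\sigma$ fixes $\Lambda$ pointwise is \emph{not} required --- rather, iff for every $\mu$ we have $c_{\sigma^{-1}(\mu)}(\sigma^{-1}\mu)(t) = c_\mu$. Taking absolute values (or working over $\overline k$ and comparing which basis vectors appear), since the $c_\lambda$ are generic the equality of monomials forces $\sigma(\lambda) = \lambda$ for all $\lambda$, i.e.\ $\sigma$ acts trivially on $\Lambda$; and then the remaining condition is $\lambda(t) = 1$ for all $\lambda \in \Lambda$, i.e.\ $t = 1$ by (a). Thus the generic stabilizer is
\[
\Stab_G(v) = \{\, \sigma \in F : \sigma|_\Lambda = \id \,\} \subset F .
\]
Therefore the $G$-action on $V_\Lambda$ is generically free if and only if (a) holds and $F$ acts faithfully on $\Lambda$. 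It then remains to check that, given (a), faithfulness of the $F$-action on $\Lambda$ is equivalent to condition (b), faithfulness of the $F$-action on $\Ker(\phi)$.

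For this last equivalence I would argue as follows. There is a short exact sequence of $\bbZ[F]$-modules $0 \to \Ker(\phi) \to \bbZ[\Lambda] \xrightarrow{\phi} X(D) \to 0$, using surjectivity of $\phi$ from (a). If $\sigma \in F$ acts trivially on $\Lambda$, then it acts trivially on the permutation module $\bbZ[\Lambda]$, hence on the submodule $\Ker(\phi)$; so faithfulness on $\Ker(\phi)$ implies faithfulness on $\Lambda$. For the converse, suppose $\sigma$ acts trivially on $\Ker(\phi)$ but nontrivially on $\Lambda$; I want a contradiction. The key point is a standard fact (this is where the argument of~\cite[Lemma 3.1]{lr} enters): if $F$ permutes the basis $\Lambda$ of $\bbZ[\Lambda]$ and $\sigma$ moves some basis element, then $\sigma$ acts nontrivially on $\Ker(\phi)$ \emph{provided} $\Ker(\phi)$ is ``large enough'' relative to the orbit structure --- concretely, for any $\sigma$-orbit $\{\lambda_1,\dots,\lambda_m\}$ of size $m \ge 2$ inside $\Lambda$, the element $v_{\lambda_1} - v_{\lambda_2}$ lies in $\bbZ[\Lambda]$ and is moved by $\sigma$; one must check it (or a suitable combination within the orbit) lies in $\Ker(\phi)$. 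Since $\phi(v_{\lambda_1} - v_{\lambda_2}) = \lambda_1 - \lambda_2 \in X(D)$, this difference lies in $\Ker(\phi)$ precisely when $\lambda_1 = \lambda_2$ in $X(D)$ --- which need not hold. I expect this to be the main obstacle: the two potential subtleties are (1) the $\lambda$'s in $\Lambda$ need not be distinct as characters, and (2) even when they are distinct, one must produce a genuine nonzero element of $\Ker(\phi)$ moved by $\sigma$, which requires exploiting the relations among the $\lambda_i$. The correct fix is to note that if $\sigma$ has an orbit $\mathcal O$ of size $m$ on $\Lambda$, then for any integer vector $(a_\lambda)_{\lambda \in \mathcal O}$ with $\sum a_\lambda \lambda = 0$ in $X(D)$ and $\sum a_\lambda = 0$ (such relations exist in abundance once $|\mathcal O| \ge 2$, e.g.\ because $\Ker(\phi)$ surjects onto the augmentation-type relations), the element $\sum a_\lambda v_\lambda$ lies in $\Ker(\phi)$ and is generically moved by $\sigma$; combined with the fact from~\cite[Lemma 3.1]{lr} that $\sigma$ fixes $\Ker(\phi)$ pointwise forces each nontrivial $\sigma$-orbit on $\Lambda$ to be ``collapsed'' by $\phi$, contradicting faithfulness. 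I would present this carefully, following the characteristic-free line of~\cite[Proposition 2.1]{lemire}, but flag that the only real content beyond bookkeeping is this orbit-collapsing argument.
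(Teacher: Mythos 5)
The computation of the generic stabilizer is where your argument breaks down. When you test whether $g = t\sigma$ fixes the generic point $v = \sum_\lambda c_\lambda v_\lambda$, the equations $c_{\sigma^{-1}(\mu)}\,(\sigma^{-1}\mu)(t) = c_\mu$ do \emph{not} force $\sigma(\lambda)=\lambda$ for all $\lambda$: the torus element $t$ is allowed to depend on $\sigma$, and it can absorb the mismatch of coefficients created by a nontrivial permutation of the basis. Concretely, take $D=\bbG_m^2$, $F=\bbZ/2\bbZ$ swapping the two factors, and $\Lambda=\{e_1,e_2\}$ the standard basis of $X(D)=\bbZ^2$. Then $V_\Lambda=k^2$ with $D$ acting diagonally and the involution $\tau$ swapping coordinates, and the point $(c_1,c_2)$ with $c_1c_2\neq 0$ is fixed by $\bigl(c_1/c_2,\,c_2/c_1\bigr)\cdot\tau$; the generic stabilizer has order $2$ even though $F$ acts faithfully on $\Lambda$. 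This is consistent with the lemma, since here $\Ker(\phi)=0$ and (b) fails, but it contradicts both your formula $\Stab_G(v)=\{\sigma\in F : \sigma|_\Lambda=\id\}$ and the resulting assertion that generic freeness is equivalent to (a) plus faithfulness of $F$ on $\Lambda$. Faithfulness on $\Lambda$ is genuinely weaker than (b): only the implication ``faithful on $\Ker(\phi)$ implies faithful on $\Lambda$'' holds (as you observe, $\sigma|_\Lambda=\id$ forces $\sigma$ to act trivially on $\bbZ[\Lambda]$ and hence on $\Ker(\phi)$). The converse, which you flag as the ``main obstacle'' and hope to repair by an orbit-collapsing argument, is simply false, so no amount of bookkeeping will close that step.

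The correct way to organize the stabilizer computation --- and the route the paper takes --- is to restrict to the dense open $F$-stable torus $U\simeq\bbG_m^{\Lambda}\subset V_\Lambda$ and apply the anti-equivalence $\Diag$. The action map $\rho\colon D\to U$ is dual to $\phi$, so $\Ker(\rho)=\Diag(\Cok(\phi))$ is trivial iff (a) holds; assuming that, for $u\in U$ the stabilizer $\Stab_G(u)$ meets $D$ trivially and surjects onto $\Stab_F(\bar u)$, where $\bar u$ is the image of $u$ in $Q:=U/\rho(D)=\Diag(\Ker(\phi))$ (if $\sigma(\bar u)=\bar u$ then $\sigma(u)=\rho(t)\cdot u$ for some $t\in D$, and $\rho(t)^{-1}\sigma$ fixes $u$). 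Hence generic freeness of $G$ on $V_\Lambda$ is equivalent to (a) together with generic freeness --- equivalently, by Remark~\ref{rem1.5-1}, faithfulness --- of the $F$-action on the torus $Q$, i.e.\ on its character lattice $\Ker(\phi)$. The compensating elements $\rho(t)\sigma$ in the example above are exactly the ones your direct computation misses: they correspond to $\sigma\in F$ fixing $\bar u\in Q$.
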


\begin{proof} 
Let $U \simeq \bbG_m^n$ be the diagonal subgroup 
of $\GL(V_{\Lambda})$,
in the basis $e_{\lambda}$, where $\lambda \in \Lambda$. 
Here $n = |\Lambda| = \dim(V_{\Lambda})$. The $G$-action on $V$
induces an $F$-equivariant morphism $\rho \colon D \to U$, which
is dual to $\phi$ under the usual (anti-equivalence) $\Diag$
between finitely generated abelian groups and 
diagonalizable algebraic groups.
Applying $\Diag$ to the exact sequence
\[
(0) \rTo \Ker(\phi) \rTo \bbZ[\Lambda] \rTo^\phi  X(D) \rTo \Cok(\phi) 
\rTo (0) \, , \]
of finitely generated abelian $\bbZ[F]$-modules we obtain
an $F$-equivariant exact sequence
\[
1\rTo N\rTo D\rTo^\rho U\rTo Q\rTo1 \, , 
\]
of diagonalizable groups, where
$U= \Diag(\bbZ[\Lambda])$, $N= \Diag(\Cok(\phi))$ 
and $Q= \Diag(\Ker(\phi))$; cf.~\cite[I 5.6]{Ja} or \cite[IV 1.1]{DG}.
Since $U$ is $F$-equivariantly isomorphic to a dense open subset 
of $V$, the $G$-action on $V$ is generically 
free if and only if the $G$-action on $U$ 
is generically free. On the other hand, the $G$-action on
$U$ is generically free if and only if (i) the $D$-action on $U$
is generically free, and (ii) the $F$-action on $Q$ is generically free.

It is now easy to see that (i) is equivalent to (a) and (ii) is
equivalent to (b); cf. Remark~\ref{rem1.5-1}.
\end{proof}

\section{Subgroups of prime-to-$p$ index}
\label{section.reductions}

Our starting point is the following lemma.

\begin{lem} \label{lem.Sylow} Let $G'$ be a closed subgroup 
of a smooth algebraic group $G$ defined over $k$. Assume 
that the index $[G:G']$ is finite and prime to $p$. Then 
$\ed(G; p) = \ed(G'; p)$.
\end{lem}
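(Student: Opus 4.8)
The plan is to prove the two inequalities $\ed(G; p) \le \ed(G'; p)$ and $\ed(G; p) \ge \ed(G'; p)$ separately. The first is the easy direction and follows from a general restriction principle: for a functor, passing to a subgroup can only decrease essential dimension at $p$, because any versal object for $G'$ at $p$ gives, via induction/corestriction-type arguments, enough control on torsors for $G$ after a prime-to-$p$ extension. Concretely, one uses that $H^1(K, G') \to H^1(K, G)$ and a torsor-theoretic comparison: a $G$-torsor over $K$, after a suitable prime-to-$p$ extension $L/K$, reduces to $G'$, so its essential dimension at $p$ is bounded by that of the corresponding $G'$-torsor. This is standard and I would cite the appropriate statements (e.g. the behaviour of $\ed(\,\cdot\,;p)$ under finite prime-to-$p$ extensions and under the inclusion of a subgroup of prime-to-$p$ index).

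For the reverse inequality $\ed(G; p) \ge \ed(G'; p)$, the key input is that $G'$ contains a Sylow-type subgroup in the sense relevant to $p$: since $[G:G']$ is prime to $p$, any $G$-torsor, after a prime-to-$p$ field extension, comes from a $G'$-torsor. First I would take a field extension $K/k$ and a $G'$-torsor $E \to \Spec K$ which is versal for $\ed(G'; p)$, or more precisely one realizing (up to prime-to-$p$ extensions) the value $\ed(G'; p)$. Pushing $E$ forward along $G' \hookrightarrow G$ gives a $G$-torsor $E \times^{G'} G$ over $K$. The crucial point is the reverse: over any extension $L/K$, the fiber of $H^1(L, G') \to H^1(L, G)$ over the class of a $G$-torsor $T$ becomes nonempty after a prime-to-$p$ extension — indeed the obstruction lives in a set of size dividing $[G:G']$, which is prime to $p$ (this is where smoothness of $G$ is used, to guarantee $H^1$ behaves well and the fibers of the map on $H^1$ are controlled by $[G:G']$). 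Hence the essential dimensions at $p$ of the $G$-torsor and of a $G'$-torsor mapping to it agree after prime-to-$p$ extensions, giving $\ed(G; p) \ge \ed(G'; p)$.

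The main obstacle, and the step requiring the most care, is the comparison of torsors across the inclusion $G' \hookrightarrow G$: showing that a $G$-torsor reduces to $G'$ after a prime-to-$p$ extension, and that this reduction does not change essential dimension at $p$. This rests on the fact that the fibers of $H^1(L,G') \to H^1(L,G)$ are, loosely, twisted forms of the finite set $G/G'$, whose cardinality divides $[G:G']$; a twisted form of a finite set of prime-to-$p$ order has a point over some prime-to-$p$ extension. I would make this precise using the twisting formalism for nonabelian cohomology (as in Serre, \emph{Galois Cohomology}, I.5), together with the definition of $\ed(\,\cdot\,;p)$ which only sees behaviour up to prime-to-$p$ extensions. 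Assembling these two inequalities yields $\ed(G; p) = \ed(G'; p)$.
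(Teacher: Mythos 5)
Your overall strategy matches the paper's: the equality splits into two inequalities, and the substantive one, $\ed(G;p)\le\ed(G';p)$, is proved exactly as in the paper, by showing that $H^1(K,G')\to H^1(K,G)$ is $p$-surjective because the quotient $X/G'$ of a $G$-torsor $X$ is a twisted form of the finite scheme $G/G'$ of degree $[G:G']$ prime to $p$, hence acquires a point over a prime-to-$p$ extension. (The paper makes this precise by showing $G/G'$ is affine and smooth, so that $K[X/G']$ is an \'etale algebra of $K$-dimension $[G:G']$ and therefore has a factor $L_j$ with $[L_j:K]$ prime to $p$; this is where the smoothness hypothesis actually enters, rather than in any general statement about fibers of $H^1$.)

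Your argument for the reverse inequality $\ed(G;p)\ge\ed(G';p)$, however, has a genuine gap. You push a versal $G'$-torsor $E$ forward to $E\times^{G'}G$, descend the latter to a small field $K_0$ after a prime-to-$p$ extension, and then invoke nonemptiness of the fiber of $H^1(\cdot,G')\to H^1(\cdot,G)$ over the descended class. But nonemptiness only produces \emph{some} $G'$-torsor over (an extension of) $K_0$ mapping to the right $G$-class; it does not show that $E$ \emph{itself} descends, since the fiber may contain several classes. The assertion that the two essential dimensions ``agree'' is precisely what needs proof. The correct tool, which the paper simply cites, is the general inequality $\ed(G;p)\ge\ed(H;p)+\dim H-\dim G$ for any closed subgroup $H$ (no hypothesis on the index is needed here); applied with $H=G'$ and $\dim G'=\dim G$ it gives the inequality at once. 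If you want to prove it rather than cite it, the missing step is that the class of $E$ in the fiber corresponds to an $L$-point of the finite $K_0$-scheme obtained by twisting $G/G'$, and such a point is defined over a \emph{finite} extension of $K_0$, which does not increase transcendence degree; without this refinement the argument does not close.
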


In the case where $G$ is finite a proof can be found 
in~\cite[Proposition 4.10]{merkurjev}; the argument below
proceeds along similar lines.

\begin{proof}
Recall that if $G$ is a linear algebraic group 
and $H$ is a closed subgroup then
\begin{equation} \label{e.subgroup}
\ed(G; p) \ge \ed(H; p) + \dim(H) - \dim(G) \, ; 
\end{equation}
for any prime $p$; see,~\cite[Lemma 2.2]{brv}
or~\cite[Corollary 4.3]{merkurjev}. 
Since $\dim G^\prime=\dim G$, this 
yields $\ed(G; p)\ge\ed(G^\prime; p)$. 

To prove the opposite inequality, it suffices 
to show that for any field $K/k$ the map 
$H^1(K,G^\prime)\rightarrow H^1(K,G)$ induced 
by the inclusion $G^\prime\subset G$ is $p$-surjective, 
i.e., that for every $\alpha\in H^1(K,G)$ there is 
a finite field extension $L/K$ of degree prime to $p$ 
such that $\alpha_L$ is in the image of 
$H^1(L,G^\prime)\rightarrow H^1(L,G)$;
see, e.g.,~\cite[Proposition 1.3]{merkurjev}.

Let $X$ be a $G$-torsor over $K$ and $X/G$ 
the quotient by the action of $G^\prime$. 
For a field $L/K$ and an $L$-point $\Spec(L)\rightarrow X/G^\prime$ 
we construct a $G^\prime$-torsor $Y$ as the pullback 
\[
\begin{diagram}[size=0.6cm]
Y&\rTo&X\\
\dTo&&\dTo\\
\Spec(L)&\rTo&X/G^\prime\\
&&\dTo\\
&& \Spec(K)
\end{diagram}
\]
In this situation $Y\times^{G^\prime} G \cong X_L$ 
as $G$-torsors. Thus we have the natural diagram
\[     
\begin{diagram}[size=0.6cm]
H^1(L,G^\prime)&\rTo&&&H^1(L,G)\\
&[Y]&\rMapsto&[X]_L&\uTo\\
&&&\uMapsto&\\
&& &[X]&&\\
&&&&H^1(K,G)
\end{diagram}
\]    
where $[X]$ and $[Y]$ denote the classes of $X$ and $Y$ in $H^1(K, G)$
and $H^1(L, G')$, respectively. 
It remains to show the existence of such an $L$-point, 
with the degree $[L:K]$ prime to $p$.

Note that $G/G^\prime$ is affine, since $G$ and $G^\prime$ are of the same dimension and hence 
$G/G^\prime\cong (G/G^\circ)/(G^\prime/G^\circ)=\Spec k[G/G^\circ]^{G^\prime/G^\circ}$ where $G^\circ$ is the
connected component of $G$ (and $G^\prime$). 
Furthermore $G/G^\prime$ is smooth; cf. \cite[III 3.2.7]{DG}.
Let $K_s$ be the separable closure of $K$. 
$X$ being a $G$-torsor, we 
have $X_{K_s}\cong G_{K_s}$ 
and $(X/G^\prime)_{K_s}\cong (G/G^\prime)_{K_s}$ which implies that $X/G^\prime$
is also affine, cf. \cite[III 3.5.6 d)]{DG}. 
Thus, $K[X/G^\prime]\otimes K_s\cong 
k[G/G^\prime]\otimes K_s$ is reduced and its dimension 
$\dim_K K[X/G^\prime]=[G:G^\prime]$ is not divisible by $p$ by assumption. 

Therefore $K[X/G^\prime]$ is \'etale or, equivalently, 
a product of separable field extensions of $K$
\[
K[X/G^\prime]=L_1\times\cdots\times L_r;
\]
see, e.g.,~\cite[V, Theorem 4]{bourbaki}.
For each $L_j$ the projection 
$K[X/G^\prime]\rightarrow L_j$ is an $L_j$-point 
of $X/G'$
and since \[
\dim_K K[X/G^\prime]=\sum_{j=1}^r [L_j:K]\quad\mbox{ is prime to $p$,}
\]
one of the fields $L_j$ must be of degree prime to $p$ over $K$. 
We now take $L=L_j$.
\end{proof}

\begin{cor} \label{cor.S_n} Suppose $k$ is a field of 
characteristic $\ne p$ containing a primitive $p$th 
root of unity. Then $\ed_k(\Sym_n; p) = [n/p]$.
\end{cor}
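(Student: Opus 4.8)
The goal is to compute $\ed_k(\Sym_n; p)$, and the natural strategy is the one outlined in Section~\ref{sect1.5}: produce a matching upper and lower bound, both equal to $[n/p]$. For this we want to exploit Lemma~\ref{lem.Sylow}, which says that passing to a subgroup of prime-to-$p$ index does not change $\ed(\,\cdot\,;p)$. The first step is therefore to replace $\Sym_n$ by a Sylow $p$-subgroup $P$. Writing $n = p q + s$ with $0 \le s < p$, the Sylow $p$-subgroup of $\Sym_n$ is (up to conjugacy) the Sylow $p$-subgroup of $\Sym_{pq}$, since the remaining $s$ points contribute nothing to the $p$-part. Moreover a Sylow $p$-subgroup of $\Sym_{pq}$ contains, and has the same $p$-power order as, the subgroup $(\ZZ/p\ZZ)^q \subset \Sym_{pq}$ generated by $q$ disjoint $p$-cycles; in fact $(\ZZ/p)^q$ sits inside $P$ with prime-to-$p$ index precisely when $q < p$, but in general $P$ is an iterated wreath product. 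The cleanest route is: $\ed_k(\Sym_n;p) = \ed_k(P;p)$ by Lemma~\ref{lem.Sylow}, and then bound $\ed_k(P;p)$ directly.

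For the \emph{upper bound} $\ed_k(P;p) \le [n/p] = q$, I would exhibit a faithful (equivalently, since $P$ is a finite constant $p$-group and we may work with a smooth irreducible variety, generically free — cf. Remark~\ref{rem1.5-1}) linear representation of $P$ of dimension $q + \dim(P) = q$ (as $P$ is finite, $\dim P = 0$), and invoke the cited bound $\ed_k(P) \le \dim V - \dim P$. Concretely, $P \subset \Sym_{pq}$ acts on $k^{pq}$ by permuting coordinates; decomposing $k^{pq} = k^p \oplus \cdots \oplus k^p$ into the $q$ blocks permuted among themselves, one sees that the quotient representation on $k^{pq}/k^q$ (collapsing each block's "diagonal") or, more simply, a suitable $q$-dimensional faithful summand, does the job. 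Since $k$ contains a primitive $p$th root of unity, each $\ZZ/p$ factor is diagonalizable over $k$, and one can build a faithful monomial representation of the wreath-product $P$ of dimension exactly $q$ by taking a single nontrivial character of the "bottom" $\ZZ/p$ in each of the $q$ slots and letting the rest of $P$ permute. Verifying faithfulness of this $q$-dimensional representation is the routine part; it follows because the character lattice generated is all of $X(D)$ for the diagonalizable part $D = (\ZZ/p)^q$, and the permutation action on it is faithful on the remaining quotient.

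For the \emph{lower bound} $\ed_k(P;p) \ge q$, the key observation is that $\ed_k(\,\cdot\,;p)$ is monotone under inclusion of subgroups of the same dimension (inequality~\eqref{e.subgroup}), so it suffices to find a subgroup of $P$ with essential dimension at $p$ equal to $q$. The subgroup $A = (\ZZ/p\ZZ)^q$ generated by $q$ disjoint $p$-cycles is elementary abelian of rank $q$, and over a field containing a primitive $p$th root of unity one has $\ed_k((\ZZ/p)^q; p) = \ed_k((\ZZ/p)^q) = q$ — this is the classical computation of the essential dimension of a finite abelian group (each $\ZZ/p$ is isomorphic to $\mu_p$, and $\ed(\mu_p^q) = q$). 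Hence $\ed_k(\Sym_n; p) = \ed_k(P; p) \ge \ed_k(A; p) = q = [n/p]$. Combined with the upper bound, this gives equality.

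**Main obstacle.** The genuinely delicate point is not the lower bound (that is a citation) but making sure the $q$-dimensional faithful representation of the full Sylow subgroup $P$ — which is an \emph{iterated} wreath product $\ZZ/p \wr \ZZ/p \wr \cdots$ when $n \ge p^2$, not just $(\ZZ/p)^q$ — really exists and really has dimension $q$ and not something larger. One must check that the permutation representation of $P$ on the $pq$ letters, after peeling off the "all-ones" vector in each of the $q$ orbits of size $p$ to get a $q$-dimensional space, is still faithful as a $P$-representation; equivalently, that no nontrivial element of $P$ acts trivially on $k^{pq}/(\text{block diagonals})$. This is where one genuinely uses the wreath-product structure and where a short but real argument (rather than a one-line citation) is needed. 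If one prefers to avoid this, an alternative is to note $\ed_k(P) \le \ed_k(P_{\mathrm{ab}}\text{-related bound})$ via Lemma~\ref{lem.gen-free} applied to $G = D \rtimes F$ with $D$ the diagonalizable normal subgroup generated by all the bottom-level $\ZZ/p$'s — but in any case the arithmetic check that the faithful lattice can be generated by $[n/p]$ weights is the crux.
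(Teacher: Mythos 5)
Your lower bound is exactly the paper's: both reduce to the elementary abelian subgroup $D=(\bbZ/p\bbZ)^{[n/p]}$ generated by $[n/p]$ disjoint $p$-cycles and cite $\ed_k(\mu_p^{[n/p]};p)=[n/p]$. For the upper bound your route genuinely differs from the paper's, and the difference is precisely what makes your version incomplete. You pass to the Sylow $p$-subgroup $P$ and try to exhibit a faithful $[n/p]$-dimensional representation of the iterated wreath product $P$ itself; you correctly flag the faithfulness verification (and, implicitly, the need for a semidirect decomposition $P=D\rtimes F$ with $F$ acting faithfully on the set of small blocks, without which the monomial construction of Section~\ref{sect2} and Lemma~\ref{lem.gen-free} do not directly apply) as ``the crux,'' but you never carry it out. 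The paper sidesteps this entirely: instead of $P$ it uses the larger subgroup $D\rtimes\Sym_m$ ($m=[n/p]$, with $\Sym_m$ permuting the $p$-cycles), which still has prime-to-$p$ index in $\Sym_n$ since $v_p(p^m\,m!)=m+\sum_{i\ge1}[m/p^i]=\sum_{i\ge1}[n/p^i]=v_p(n!)$. For this group the check is trivial: take $\Lambda=\{\sigma_1^*,\dots,\sigma_m^*\}$ the dual basis; condition (a) of Lemma~\ref{lem.gen-free} holds because $\Lambda$ is a basis of $X(D)$, and condition (b) holds because $\Ker(\phi)=p\bbZ^m\subset\bbZ[\Lambda]$ is the permutation lattice, on which $\Sym_m$ obviously acts faithfully. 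No wreath-product analysis is needed. Your approach can be completed (for $P=P_{p^r}$ one does have $P=D\rtimes P_{p^{r-1}}$ with $P_{p^{r-1}}$ permuting the blocks faithfully, and similarly for general $n$ via the $p$-adic decomposition), but as written the hardest step is asserted rather than proved.

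One concrete error to fix: your ``simpler'' alternative constructions are dimensionally wrong. The span of the block-diagonal vectors is $q$-dimensional but is just the permutation representation of $P$ on the blocks, which kills $D$ and is therefore not faithful; the quotient $k^{pq}/k^q$ obtained by collapsing each block's diagonal has dimension $q(p-1)$, not $q$. Only the monomial construction (one nontrivial character per bottom $p$-cycle, which uses the primitive $p$th root of unity in $k$) gives dimension $q$, and that is the one you should keep and justify.
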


\begin{proof}
Let $m = [n/p]$ and let $D \simeq (\bbZ/p \bbZ)^m$ be the 
subgroup generated by the disjoint $p$-cycles 
\[ \sigma_1 = (1, \dots, p), \ldots, 
 \sigma_m = ((m-1)p + 1, \dots, mp) \, . \]
The inequality $\ed(\Sym_n; p) \ge \ed_k(D; p) \ge  [n/p]$ is well known; 
see,~\cite[Section 6]{br1},~\cite[Section 7]{br2},
or~\cite[Proposition 3.7]{bf}.

To the best of our knowledge, the opposite inequality
was first noticed by J.-P. Serre (private communication, 
May 2005) and independently by R. L\"otscher~\cite{loetscher}.
The proof is quite easy; however, since it has not previously 
appeared in print, we reproduce it below.

The semi-direct product $D \rtimes \Sym_m$, where
$\Sym_m$ permutes $\sigma_1, \dots, \sigma_m$, 
embeds in $\Sym_n$ with index prime to $p$.
By Lemma~\ref{lem.Sylow},
$\ed_k(D \rtimes \Sym_m ;p) = \ed_k(\Sym_n;p)$,
and it suffices to show that $\ed_k(D \rtimes \Sym_m) \le [n/p]$.

As we mentioned in Section~\ref{sect1.5}, in order to prove this,
it suffices to construct a generically free 
$m$-dimensional representation of $D \rtimes \Sym_m$
defined over $k$. To construct such a representation,
let $\sigma_1^*, \dots, \sigma_m^* \subset X(D)$ 
be the ``basis" of $D$ dual to $\sigma_1, \dots, \sigma_m$.  
That is, we choose a primitive $p$th 
root of unity $\zeta \in k$ and set
\[ \sigma_i^*(\sigma_j) = \begin{cases} \text{$\zeta$, if $i = j$ and} \\
\text{$1$, otherwise.} \end{cases} \]
The $\Sym_m$-invariant subset $\Lambda = \{
\sigma_1^*, \dots, \sigma_m^* \}$ of $X(D)$
gives rise to the $m$-dimensional $k$-representation 
$V_{\Lambda}$ of $D \rtimes \Sym_m$, as 
in Section~\ref{sect2}.  An easy 
application of Lemma~\ref{lem.gen-free} shows that 
this representation is generically free.
\end{proof}

\section{First reductions and proof of Theorem~\ref{thm1} parts {\rm(a)} and {\rm(b)}} 

Let $T \simeq \bbG_m^n/\Delta$ be the diagonal 
maximal torus in $\PGL_n$, where $\Delta = \bbG_m$
is diagonally embedded into $\bbG_m^n$. Recall that
the normalizer $N$ of $T$ is isomorphic to
$T \rtimes \Sym_n$, where we identify $\Sym_n$ with the
subgroup of permutation matrices in $\PGLn$.

Let $P_n$ be a Sylow $p$-subgroup of $\Sym_n$.
Lemma~\ref{lem.Sylow} tells us that
\[ \ed(N; p) = \ed(T \rtimes P_n; p) \, . \] 
Thus in order to prove Theorem~\ref{thm1} it suffices to establish
the following proposition.

\begin{prop} \label{prop1}
Let $T \simeq \bbG_m^n/\Delta$, where
$\Delta = \bbG_m$ is diagonally embedded into $\bbG_m^n$. 
Assume that a field $k$ is of characteristic $\ne p$ and
containing a primitive $p$th root of unity.  Then

\smallskip
\begin{tabular}{lll}
(a) $\ed_k(T \rtimes P_n) = \ed_k(T \rtimes P_n; p) = [n/p]$, & 
if $n$ is not divisible by $p$. \\
(b)  $ \ed_k(T \rtimes P_n) = \ed_k (T \rtimes P_n; p) = 2$, 
& if $n=p$. \\
(c)  $\ed_k(T \rtimes P_n) = \ed_k (T \rtimes P_n; p) = n^2/p - n + 1$, 
& if $n=p^r$ for some $r\ge 2$. \\
(d)  $\ed_k( T \rtimes P_n) = \ed_k (T \rtimes P_n; p) = p^e(n-p^e)-n+1$, & in all other cases.
\end{tabular}
            
\smallskip  
\noindent
Here $P_n$ is a Sylow $p$-subgroup of $\Sym_n$,
$[n/p]$ is the integer part of $n/p$ and
$p^e$ is the highest power of $p$ dividing $n$.
\end{prop}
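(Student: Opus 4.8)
We follow the two-step strategy of Section~\ref{sect1.5}, applied to $G = T\rtimes P_n$: here $D := T$ is diagonalizable with character lattice $X(T) = A_{n-1} := \{a\in\bbZ^n : \sum_i a_i = 0\}$ (the root lattice), on which $F := P_n\subset\Symn$ acts by permuting coordinates, and $\dim G = n-1$. Since $\ed_k(G;p)\le\ed_k(G)$, in each of the four cases it suffices to (i) construct a generically free $k$-representation of $G$ of dimension $d + (n-1)$, so that $\ed_k(G)\le d$, and (ii) prove $\ed_k(G;p)\ge d$, where $d$ is the asserted value.

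For (i) I shall in each case produce a $P_n$-invariant finite set $\Lambda\subset A_{n-1}$ that generates $A_{n-1}$ \emph{over $\bbZ$}; then $V_\Lambda$ (the monomial representation of \eqref{e.P_n}--\eqref{e.T}) has dimension $|\Lambda|$ and restricts generically freely to $T$, and one applies Lemma~\ref{lem.gen-free} — or, in parts (a) and (b), Lemma~\ref{lem.gen-free0} together with an auxiliary faithful representation $W$ of $P_n$. Explicitly: in (a), $p\nmid n$, so $P_n$ fixes some index $j$; take $\Lambda=\{e_i-e_j : i\ne j\}$ (it has $n-1$ elements and generates $A_{n-1}$) and for $W$ a faithful representation of $P_n$ of dimension $[n/p]$, obtained from the decomposition $P_n\cong\prod_i(C_p\wr\cdots\wr C_p)^{a_i}$ into iterated wreath products and an induced-character construction, giving $\dim(V_\Lambda\times W)=(n-1)+[n/p]$. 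In (b), $P_n=C_p=\langle(1\,2\,\cdots\,p)\rangle$; take $\Lambda=\{e_i-e_{i+1} : i\in\bbZ/p\}$ and $W$ a faithful character of $C_p$. In (c), $n=p^r$: realize $P_n$ as $(P_{p^{r-1}})^p\rtimes C_p$ acting on $p$ blocks $B_0,\dots,B_{p-1}$ of size $p^{r-1}$ (each factor inside one block, the top $C_p$ cycling the blocks) and set $\Lambda=\{e_a-e_b : a\in B_c,\ b\in B_{c+1},\ c\in\bbZ/p\}$; one checks that $\Lambda$ is $P_n$-invariant, generates $A_{n-1}$ over $\bbZ$ (all roots are recovered), and has $p\cdot(p^{r-1})^2 = n^2/p$ elements, so $\ed_k(G)\le n^2/p-n+1$. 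In (d), with $p^e$ the exact power of $p$ dividing $n$, choose a single $P_n$-orbit $B\subset\{1,\dots,n\}$ of size $p^e$ (one exists since the base-$p$ digit $a_e$ of $n$ is nonzero) and set $\Lambda=\{e_a-e_b : a\in B,\ b\notin B\}$; this is $P_n$-invariant, generates $A_{n-1}$ over $\bbZ$, and has $p^e(n-p^e)$ elements, so $\ed_k(G)\le p^e(n-p^e)-n+1$. In all cases what remains for (i) is to verify that $P_n$ acts faithfully on $\Ker(\bbZ[\Lambda]\to A_{n-1})$; this is routine in (a) and (b) but is the bulk of the work in (c) and (d), and is carried out by analyzing this lattice as a $P_n$-module.

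For (ii), parts (a) and (b) are immediate: the projection $N=T\rtimes\Symn\to\Symn$ admits a section, so $H^1(\,\ast\,,N)\to H^1(\,\ast\,,\Symn)$ is surjective and hence $\ed_k(G;p)=\ed_k(N;p)\ge\ed_k(\Symn;p)=[n/p]$ by Corollary~\ref{cor.S_n}; for (b) one has in addition $\ed_k(N;p)\ge\ed_k(\PGL_p;p)\ge2$ by \eqref{e.N} and \eqref{e.pgl_p}. Parts (c) and (d) use the Karpenko--Merkurjev Theorem~\ref{thm.km}. Since enlarging the ground field only decreases essential dimension, we may pass to $k'=k(\zeta_{p^m})$ for $m$ large; over $k'$ the group $\Gamma:=T[p^m]\rtimes P_n$ is a finite $p$-group, and \eqref{e.subgroup} with $\dim\Gamma=0$ gives $\ed_k(G;p)\ge\ed_{k'}(\Gamma;p)-(n-1)$. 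By Theorem~\ref{thm.km}, $\ed_{k'}(\Gamma;p)=\ed_{k'}(\Gamma)$ equals the least dimension of a faithful representation of $\Gamma$, and the crux is the combinatorial estimate that this minimum is at least $d+(n-1)$ — matching the upper-bound construction — which comes down to bounding below the cardinality of a $P_n$-invariant subset of $X(T[p^m])=A_{n-1}/p^mA_{n-1}$ that generates it and on which $P_n$ acts faithfully (the reduction from arbitrary faithful representations of $\Gamma$ to such monomial data uses Clifford theory and the semidirect-product structure).

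The main obstacle is concentrated in parts (c) and (d): on the one hand, checking generic freeness of the explicit $V_\Lambda$, i.e. faithfulness of the $P_n$-action on $\Ker(\bbZ[\Lambda]\to A_{n-1})$; on the other, the matching lower estimate for faithful representations of $T[p^m]\rtimes P_n$. Both hinge on a delicate interaction between the \emph{integral} structure of the root lattice $A_{n-1}$ (which is what forces enough inter-block roots into $\Lambda$, and hence produces the quadratic-in-$n$ answers) and the iterated wreath-product combinatorics of $P_n$. There is no formal reason that the resulting lower and upper bounds should agree; that they do is the substance of the proposition.
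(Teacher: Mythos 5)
Your outline reproduces the paper's own strategy almost verbatim: the same two-step scheme, the same sets $\Lambda$ in all four cases (including the inter-block set $\{e_a-e_b : a\in B_c,\ b\in B_{c+1}\}$ in (c) and $\{e_a-e_b: a\in B,\ b\notin B\}$ in (d)), the same sectioning argument and $\ed(\PGL_p;p)\ge 2$ for the lower bounds in (a) and (b), and the same reduction of (c), (d) to a finite $p$-group followed by Karpenko--Merkurjev and Lemma~\ref{lem.characters}. But as a proof it is incomplete precisely where the work lies. First, the combinatorial lower bound --- that every $P_n$-invariant generating subset of $X(T_{(q)})$ has at least $n^2/p$ (resp.\ $p^e(n-p^e)$) elements --- is only announced as ``the crux''; the paper proves it by induction (on $r$, resp.\ on $e$) using the block-summation map $\Sigma\colon X_{p^r}\to X_{p^{r-1}}$, Nakayama's lemma to discard the part of $\Sigma(\Lambda)$ lying in $pX_{p^{r-1}}$, and the key observation that each remaining fiber contains at least $p^2$ elements because the corresponding ${\bf a}$ is non-scalar in at least two small blocks. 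Without this, parts (c) and (d) are unproved. Second, the generic freeness of $V_\Lambda$ in (c) and (d) is asserted to follow from ``analyzing this lattice as a $P_n$-module''; the actual verification (reduce faithfulness on $\Ker(\phi)$ to the center $Z_n$ of the $p$-group $P_n$ and exhibit an explicit kernel element moved by each nontrivial central element) is short but must be supplied, and your quantitative claim needs $q\ge 4$ when $p=2$ in case (c), which your ``$m$ large'' happens to cover but which you never isolate as the reason the base case $r=1$ works.

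Two of your side remarks are actually wrong and worth correcting. In cases (a) and (b) the faithfulness of $P_n$ on $\Ker(\bbZ[\Lambda]\to X(T))$ is not ``routine'' --- it \emph{fails} (the kernel is zero in (a) and is the rank-one module fixed by the $p$-cycle in (b)); this is exactly why the auxiliary faithful representation $W$ and Lemma~\ref{lem.gen-free0} are needed there, so you cannot also appeal to Lemma~\ref{lem.gen-free} in those cases. More seriously, in the lower bound for (c), (d) you reduce to invariant generating subsets of $X(T_{(q)})$ ``on which $P_n$ acts faithfully,'' citing Clifford theory. The weight set of a faithful representation of $T_{(q)}\rtimes P_n$ need not carry a faithful $P_n$-action, so a lower bound proved only for such subsets would not cover all faithful representations. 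The correct (and simpler) reduction, via Lemma~\ref{lem.characters}, imposes no faithfulness condition on the $P_n$-action on $\Lambda$: faithfulness of the representation restricted to the diagonalizable part already forces $\Lambda$ to be an invariant generating set, and the combinatorial bound must be proved for all such sets.
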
   

Our proof of each part of this proposition will be based on the strategy
outlined in Section~\ref{sect1.5}, with $G = T \rtimes P_n$.
Before we proceed with the details, we recall that
the character lattice $X(T)$ is naturally isomorphic to
\[ \{ (a_1, \dots, a_n) \in \bbZ^n \, | \, a_1 + \dots + a_n = 0 \} \, , \]
where we identify the character
\[ (t_1, \dots, t_n) \to t_1^{a_1} \ldots t_n^{a_n} \]
of $T = \bbG_m^n/\Delta$
with $(a_1, \dots, a_n) \in \bbZ^n$. Note that $(t_1, \dots, t_n)$
is viewed as an element of $\bbG_m^n$ modulo the diagonal subgroup    
$\Delta$, so the above character is well defined if and only
if $a_1 + \dots + a_n = 0$.                                 
An element $\sigma$ of $\Sym_n$ (and in particular,         
of $P_n \subset \Sym_n$) acts on ${\bf a} = (a_1, \dots, a_n) \in X(T)$
by naturally permuting $a_1, \dots, a_n$.                        

For notational convenience, we will denote by ${\bf a}_{i, j}$ 
the element of $(a_1, \dots, a_n) \in X(T)$ such that
$a_i = 1$, $a_j = -1$ and $a_h = 0$ for every $h \ne i, j$. 

We also recall that fro $n = p^r$ the Sylow $p$-subgroup $P_n$ 
of $\Sym_n$ can be described inductively as the wreath product 
\[ P_{p^r}\cong P_{p^{r-1}}\wr \bbZ/p\cong (P_{p^{r-1}})^p\rtimes \bbZ/p 
\, . \]
For general $n$, $P_n$ is the direct product of certain $P_{p^r}$, 
see Section~\ref{sect.generaln}.

\begin{proof}[Proof of Proposition~\ref{prop1}(a)]

\smallskip
Step (i): Since $n$ is not divisible by $p$, we may assume that
$P_n$ is contained in $\Sym_{n-1}$, where we identify $S_{n-1}$
with the subgroup of $\Sym_n$ consisting of permutations 
$\sigma \in \Sym_n$ such that $\sigma(1) = 1$. 

We will now construct a generically free linear representation 
$V$ of $T \rtimes S_{n-1}$ of dimension $n - 1 + [n/p]$.
Restricting this representation to $T \rtimes P_n$, we will obtain
a generically free linear representation of 
dimension $n - 1 + [n/p]$. This will show that $\ed(T \rtimes P_n) \le [n/p]$.

To construct $V$, let $\Lambda = \{ {\bf a}_{1, i} \, | \, i = 2, \dots, n \}$
and let $W$ be a $[n/p]$-dimensional faithful linear representation 
of $P_n$ constructed in the proof of Corollary~\ref{cor.S_n}. Applying 
Lemma~\ref{lem.gen-free0}(b), we see that $V = V_{\Lambda} \times W$ 
is generically free. 

\smallskip
Step (ii): Since the natural projection $p \colon T \rtimes P_n \to P_n$ 
has a section,
so does the map $p^* \colon H^1(K, T \rtimes P_n) \to H^1(K, P_n)$ 
of Galois cohomology sets. Hence, $p^*$ is surjective for every 
field $K/k$. This implies that 
\[ \ed(T \rtimes P_n) \ge \ed(P_n; p) = [n/p] \, . \]
(Note that by Lemma~\ref{lem.Sylow} $\ed(P_n; p) = \ed(\Sym_n; p)$;
and by Corollary~\ref{cor.S_n} $\ed(\Sym_n; p) = [n/p]$.)
\end{proof}

\begin{remark} \label{rem.proof2}
We will now outline a different (and perhaps, more conceptual)
proof of the upper bound $\ed(N; p) \le [n/p]$
of Theorem~\ref{thm1}(a). As we pointed out in the introduction,
$\ed(N; p)$ is the essential dimension at $p$ of the functor
\[ \text{$H^1( \, \ast \, , N) \colon K \mapsto \{$ $K$-isomorphism classes 
of pairs $(A, L)$ $\}$,} \]
where $A$ is a degree $n$ central simple algebras over $K$, $L$
is a maximal \'etale subalgebra of $A$.
Similarly, $\ed(\Sym_n; p)$ is the essential dimension at $p$ of the functor
\[ \text{$H^1( \, \ast \, , \Sym_n) \colon K \mapsto 
\{ K$-isomorphism classes of $n$-dimensional \'etale algebras $L/K \, \}$.} \]
Let $\alpha \colon 
H^1( \, \ast \, , \Sym_n) \to H^1( \, \ast \, , N)  $
be the map taking an $n$-dimensional \'etale algebra $L/K$ to $(\End_K(L), L)$.
Here we embed $L$ in $\End_K(L) \simeq \Mat_n(K)$ via the regular 
action of $L$ on itself.  

It is easy to see that, in the terminology of~\cite[Section 1.3]{merkurjev},
$\alpha$ is $p$-surjective. That is, for any class
$(A, L)$ in $H^1(K, N)$ there exists a prime-to-$p$ 
extension $K'/K$ such that $(A \otimes_K K', L \otimes_K K')$ 
lies in the image of 
$\alpha$. In fact, any $K'/K$ of degree prime-to-$p$
which splits $A$ will do; indeed,
by the Skolem-Noether theorem, any two embeddings of $L \otimes_K K'$
into $\Mat_n(K')$ are conjugate. By \cite[Proposition 1.3]{merkurjev},
we conclude that $\ed(N; p) \ge \ed(\Sym_n; p)$. Combining this with
Corollary~\ref{cor.S_n} yields the desired inequality
$\ed(N; p) \le [n/p]$.
\qed
\end{remark}

\begin{proof}[Proof of Proposition~\ref{prop1}(b)]
Here $n = p$ and $P_n \simeq \bbZ/p$ is generated by the $p$-cycle
$(1, 2, \dots, n)$.
We follow the strategy outlined in Section~\ref{sect1.5}.

\smallskip
Step (i): To show that $\ed_k(T \rtimes P_n) \le 2$, we will
construct a generically free $k$-representation of $T \rtimes P_n$
of dimension $2 + \dim(T \rtimes P_n) = n + 1$.

Let $\Lambda = \{ {\bf a}_{1, 2}, \ldots, 
{\bf a}_{p-1, p}, {\bf a}_{p, 1} \}$ and $V = V_{\Lambda} \times L$,
where $L$ is a $1$-dimensional faithful representation of
$P_n \simeq \bbZ/p$ and $T \rtimes P_n$ acts on $L$ via the natural 
projection $T \rtimes P_n \to P_n$. 
Note that $\dim(V) = | \Lambda| + 1 = n + 1$.
Since $\Lambda$ generates $X(T)$, Lemma~\ref{lem.gen-free0}(b) tells us that 
$V$ is a generically free representation of $T \rtimes P_n$.

\smallskip
Step (ii):  Recall that $\ed_k(T \rtimes P_n; p) = \ed_k(N; p)$
by Lemma~\ref{lem.Sylow}. On the other hand, as we mentioned 
in the introduction, 
\[ \ed_k(N; p) \ge \ed_k(\PGL_p; p) \ge 2 \, ; \]
see~\eqref{e.N} and~\eqref{e.pgl_p}. 
This completes the proof of Proposition~\ref{prop1}(b) 
and of Theorem~\ref{thm1}(b).
\end{proof}

\section{Proof of Theorem~\ref{thm1} part {\rm(c)}: 
The upper bound}\label{sect.upperbound}

In the next two sections we will prove Proposition~\ref{prop1}(c)
and hence, Theorem~\ref{thm1}(c). We will assume that 
$n=p^r$ for some $r\ge 2$ and follow the strategy 
of Section~\ref{sect1.5}. In this section we will carry out Step (i).
That is, we will construct construct a generically 
free representation $V$ of $T\rtimes P_n$ of dimension $p^{2r-1}$. 
Our $V$ will be of the form $V_{\Lambda}$ for a particular
$P_n$-invariant $\Lambda \subset X(T)$, following the recipe
of Section~\ref{sect2}.

For notational convenience, 
we will subdivide the integers $1, 2, \dots, p^r$ into $p$ ``big blocks"
$B_1, \dots, B_p$, where each $B_i$ consists of the $p^{r-1}$ integers   
$(i-1)p^{r-1} + 1, (i-1)p^{r-1} + 2, \dots, ip^{r-1}$. 

We define $\Lambda \subset X(T)$ as the $P_n$-orbit of the element
\[ {\bf a}_{1, p^{r-1} + 1} = (\underbrace{1, 0, \dots, 0}_{B_1}, 
\underbrace{-1, 0, \dots, 0}_{B_2}, 
\underbrace{0, 0, \dots, 0}_{B_3}, \ldots, 
\underbrace{0, 0, \dots, 0}_{B_p}) \] 
in $X(T)$. 
Thus, $\Lambda$ consists of elements
${\bf a}_{\alpha, \beta}$, subject to the condition that
if $\alpha$ lies in the big block $B_i$ then $\beta$ has to lie in 
$B_j$, where $j - i \equiv 1$ modulo $p$.
There are $p^r$ choices for $\alpha$. Once $\alpha$ is chosen,
there are exactly $p^{r-1}$ further choices for $\beta$.
Thus \[ |\Lambda| = p^r \cdot p^{r -1} = p^{2r -1} \, . \]
As described in Section~\ref{sect2}, we obtain
a linear representation $V_{\Lambda}$
of $T \rtimes P_n$ of the desired dimension 
\[ \dim(V_{\Lambda}) = |\Lambda| = p^{2r-1} \, . \]
It remains to prove that $V_{\Lambda}$ is generically free.
By Lemma~\ref{lem.gen-free} it suffices to show that 

\smallskip
(i) $\Lambda$ generates $X(T)$ as an abelian group and

\smallskip
(ii) the $P_n$ action on the kernel
of the natural morphism $\phi \colon \bbZ[\Lambda] \to X(T)$ 
is faithful.  

\smallskip
The elements ${\bf a}_{\alpha, \beta}$ clearly generate $X(T)$ 
as an abelian group, as $\alpha$ and $\beta$ range over 
$1, 2, \dots, p^r$. Thus in order to prove (i)
it suffices to show that $\Span_{\bbZ}(\Lambda)$
contains every element of this form. Suppose $\alpha$ lies 
in the big block $B_i$ and $\beta$ in $B_j$. If $j - i \equiv 1 \pmod{p}$, then
${\bf a}_{\alpha, \beta}$ lies in $\Lambda$ and there is nothing 
to prove. If $j - i \equiv 2 \pmod{p}$ then choose some 
$\gamma \in B_{i + 1}$ 
(where the subscript $i + 1$ should be viewed modulo $p$) and write
\[ {\bf a}_{\alpha, \beta} = {\bf a}_{\alpha, \gamma} +
 {\bf a}_{\gamma, \beta} \, . \] 
Since both terms on the right are in $\Lambda$, we see that
in this case ${\bf a}_{\alpha, \beta} \in \Span_{\bbZ}(\Lambda)$.
Using this argument recursively, we see that  
${\bf a}_{\alpha, \beta}$ also lies in $\Span_{\bbZ}(\Lambda)$
if $j - i \equiv 3, \dots, p \pmod{p}$, i.e., for all possible $i$ and $j$.
This proves (i).

To prove (ii), denote the kernel of $\phi$ by $M$. Since $P_n$ is 
a finite $p$-group, every normal subgroup of $P_n$ intersects 
the center of $P_n$, which we shall denote by $Z_n$. Thus 
it suffices to show that $Z_n$ acts faithfully on $M$.

Recall that $Z_n$ is the cyclic subgroup of $P_n$ of order $p$ generated
by the product of disjoint $p$-cycles 
\[
\sigma_1 \cdot \ldots \cdot \sigma_{p^{r-1}} = (1 \dots p)(p+1 \dots 2p)
\ldots (p^r - p + 1, \dots, p^r) \, . 
\]
Since $|Z_n| = p$, it either acts faithfully on $M$ or it acts 
trivially, so we only need to check that the $Z_n$-action on $M$ 
is non-trivial.
Indeed,
 $Z_n$ does not fix the non-zero element
\[
{\bf a}_{1, p^{r-1} + 1} + {\bf a}_{p^{r-1} + 1, 2 p^{r-1} + 1} + 
\dots + 
{\bf a}_{(p-1) p^{r-1} + 1, 1} \in \bbZ[\Lambda]
\]
which lies in $M$. 
This completes the proof of the upper bound of Proposition~\ref{prop1} and Theorem~\ref{thm1}(c).
\qed

\section{Theorem~\ref{thm1} part {\rm(c)}: The lower bound}
\label{section.lowerbound}

In this section we will continue to assume that $n=p^r$. We will
show that 
\begin{equation} \label{e.lowerbound(c)}
\ed(N;p) \ge p^{2r-1}-p^r+1 \, ,
\end{equation}
thus completing the proof 
of Proposition~\ref{prop1}(c) and Theorem~\ref{thm1}(c). 

First we remark that $\ed_k(G) \ge \ed_{\overline{k}}(G)$,
where $\overline{k}$ is the algebraic closure of $k$;
cf., e.g.,~\cite[Proposition 1.5]{bf}. 
Thus, for the purpose of proving the lower bound~\eqref{e.lowerbound(c)}
we may replace $k$ by $\overline{k}$
and assume that $k$ is algebraically closed.  Let
\begin{equation} \label{e.q}
\text{$q := p^e$, where $e \ge 1$ if $p$ is odd and $e \ge 2$ if $p = 2$.}
\end{equation}
be a power of $p$.  The specific choice of $e$ will not be important 
in the sequel; in particular, the reader may assume that $q = p$ if $p$ is 
odd and $q = 4$, if $p = 2$. Whatever $q$ we choose 
(subject to the above constraint) it will 
remain unchanged for the rest of this section.

Let $T_{(q)} = \mu_q^n/\mu_q$ be 
the $q$-torsion subgroup of $T = \bbG_m^n/\Delta$.  Applying 
the inequality~\eqref{e.subgroup} to $G = T \rtimes P_n$ and
its finite subgroup $H = T_{(q)} \rtimes P_n$, 
we obtain
\[ \ed(T \rtimes P_n;p) \ge \ed(T_{(q)} \rtimes P_n; p) - p^r + 1  \, . \]
Thus 
it suffices to show that 
\begin{equation} \label{ed.finite}
\ed(T_{(q)} \rtimes P_n; p) \ge p^{2r - 1} \, . 
\end{equation}
The advantage of replacing $T \rtimes P_n$ by $T_{(q)} \rtimes P_n$ is 
that $T_{(q)} \rtimes P_n$ is a finite $p$-group, so that we can
apply the following recent result of Karpenko and Merkurjev~\cite{km}.

\begin{thm} \label{thm.km} Let $G$ be a finite $p$-group and
$k$ be a field containing a primitive $p$th root of unity.
Then $\ed_k(G; p) = \ed_k(G)$ = the minimal value of
$\dim(V)$, where $V$ ranges over all faithful linear
$k$-representations $G \to \GL(V)$.
\end{thm}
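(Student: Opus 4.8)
\emph{The plan.} The statement packages two assertions: $\ed_k(G;p) \le \ed_k(G)$ (automatic) and the two inequalities $\ed_k(G) \le n$ and $\ed_k(G;p) \ge n$, where $n$ denotes the least dimension of a faithful $k$-representation of $G$; together these force $n \le \ed_k(G;p) \le \ed_k(G) \le n$. So it suffices to prove $\ed_k(G) \le n$ and $\ed_k(G;p) \ge n$. The first I would dispose of right away, since it needs neither that $G$ is a $p$-group nor that $k$ contains a $p$th root of unity: if $\rho \colon G \hookrightarrow \GL(V)$ is a faithful representation of a finite group, then for each $g \ne 1$ the fixed locus $V^g$ is a proper linear subspace, so by Remark~\ref{rem1.5-1} the $G$-action on $V$ is generically free; hence $\ed_k(G) \le \dim V - \dim G = \dim V$ by \cite[Theorem 3.4]{reichstein2}, and we take $V$ of minimal dimension.

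\emph{The hard half: reduction to a gerbe.} For $\ed_k(G;p) \ge n$ I would first pass to a $p$-versal $G$-torsor $E$ over a field $F \supseteq k$ (for instance the generic torsor coming from a faithful linear $G$-variety $W$, with $F = k(W)^G$), so that $\ed_k(G;p) = \ed(E;p)$ and it is enough to bound $\ed(E;p)$ below by $n$. Next, let $C$ be the subgroup of $Z(G)$ consisting of elements of order dividing $p$; since $\mu_p \subset k$, $C$ is a $k$-group isomorphic to $\mu_p^{\,s}$, with character group $\widehat C \cong (\bbZ/p\bbZ)^s$. A key elementary observation is that a representation of the $p$-group $G$ is faithful if and only if its restriction to $C$ is faithful (a nontrivial normal subgroup of $G$ meets $Z(G)$, hence $C$, nontrivially); consequently $n$ depends only on the $C$-restriction data, namely $n = \min \bigl\{ \textstyle\sum_{\chi \in S} d(\chi) \bigr\}$ taken over generating subsets $S \subseteq \widehat C$, where $d(\chi)$ is the least dimension of a $k$-irreducible representation of $G$ on which $C$ acts through $\chi$. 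On the torsor side, the central extension $1 \to C \to G \to \overline G \to 1$ turns the image $\overline E \in H^1(F,\overline G)$ of $E$ into a gerbe $\mathcal X_E$ banded by $C \cong \mu_p^{\,s}$ (the gerbe of liftings of $\overline E$ to a $G$-torsor), and one gets $\ed(E;p) \ge \ed(\mathcal X_E;p)$.

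\emph{The main obstacle.} It then remains to show $\ed(\mathcal X_E;p) \ge n$, and this is the crux. A dimension-$m$ ``representation'' of the $\mu_p^{\,s}$-gerbe $\mathcal X_E$ amounts to trivializing it over a rank-$m$ projective bundle, and I would bound $\ed(\mathcal X_E;p)$ below by the canonical $p$-dimension $\operatorname{cdim}_p$ of a product of (generalized) Severi--Brauer varieties built from the $s$ Brauer components of the class of $\mathcal X_E$ in $H^2(F,\mu_p^{\,s}) = \operatorname{Br}(F)[p]^{\oplus s}$. Because $E$ was chosen versal, those Brauer classes sit in general position, so Karpenko's incompressibility theorem for Severi--Brauer varieties (together with its product refinement) evaluates this $\operatorname{cdim}_p$ exactly; a bookkeeping step, using the numbers $d(\chi)$, then identifies the value with the representation-theoretic quantity $n$ from the previous paragraph. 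I expect essentially all the real difficulty to live in this last step — the incompressibility input and the matching of the resulting geometric lower bound with $\min_V \dim V$ — whereas the reductions to a versal torsor and then to the gerbe $\mathcal X_E$ are formal.
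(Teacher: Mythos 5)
First, a point of order: the paper does not prove this statement at all --- it is quoted verbatim as the main theorem of Karpenko and Merkurjev \cite{km} and used as a black box, so there is no internal proof to compare yours against. That said, your outline is recognizably the architecture of the actual Karpenko--Merkurjev argument: the easy inequality $\ed_k(G)\le \min\dim V$ via generic freeness of a faithful representation, the reduction of faithfulness to the central subgroup $C\cong\mu_p^{\,s}$ of exponent $p$, the passage to the gerbe of liftings along $1\to C\to G\to\overline{G}\to 1$, and the evaluation of the essential dimension of a $\mu_p^{\,s}$-gerbe via canonical $p$-dimension of products of generalized Severi--Brauer varieties and Karpenko's incompressibility theorems. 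The preliminary reductions you describe are essentially sound, with one correction: in \cite{km} the gerbe is attached to a generic $\overline{G}$-torsor, i.e., to the image of the versal torsor under $H^1(\cdot,G)\to H^1(\cdot,\overline{G})$, and the inequality $\ed(G;p)\ge \ed(\mathcal{X};p)$ rests on an analysis of the fibers of that map rather than on the purely formal step you invoke.

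The genuine gap is exactly where you locate it, and it cannot be dismissed as bookkeeping. The formula $\ed(\mathcal{X};p)=\min\sum_\chi \operatorname{ind}(\beta_\chi)$ for a gerbe banded by $\mu_p^{\,s}$ (minimum over generating sets of the relevant subgroup of $\operatorname{Br}(F)$), the incompressibility input for \emph{products} of Severi--Brauer varieties (which required a new degeneration/Chow-group argument, not just Karpenko's earlier single-variety theorem), and the identification $d(\chi)=\operatorname{ind}(\beta_\chi)$ for the generic torsor together constitute essentially the entire content of \cite{km}. As written, your proposal establishes the upper bound and the formal reductions but defers the substance of the lower bound to a stated expectation; to close it you would have to carry out that computation in full or cite \cite{km}, at which point you would be deriving the theorem from itself.
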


Since we are assuming that $k$ is algebraically closed,
it thus remains to show that $T_{(q)} \rtimes P_n$
does not have a faithful linear representation of dimension 
$< p^{2r-1}$. Lemma~\ref{lem.characters} further reduces 
this representation-theoretic assertion to the combinatorial
statement of Proposition~\ref{prop.lower-bound} below. 

Before stating Proposition~\ref{prop.lower-bound} we recall that 
the character lattice of $T_{(q)}$ is 
\[ \text{$X_n:= \{ (a_1, \dots, a_n) \in (\bbZ/q\bbZ)^n\, | \, 
a_1 + \dots + a_n = 0$ in $\bbZ/q \bbZ \; \}$,}  \]
where we identify the character 
\[ (t_1, \dots, t_n) \to t_1^{a_1} \dots t_n^{a_n} \]
of $T_{(q)}$ with $(a_1, \dots, a_n) \in (\bbZ/q \bbZ)^n$.
Here $(t_1, \dots, t_n)$ stands for an element of
$\mu_q^n$, modulo the diagonally embedded $\mu_q$, so
the above character is well defined if and only 
if $a_1 + \dots + a_n = 0$ in $\bbZ/q \bbZ$. (This is completely 
analogous to our description of the character lattice of $T$ 
in the previous section.) Note that $X_n$ depends on the integer $q = p^e$, which 
we assume to be fixed throughout this section.

\begin{prop} \label{prop.lower-bound} 
Let $n = p^r$ and $P_n$ be a Sylow $p$-subgroup of $\Sym_n$. 
If $\Lambda$ is a $P_n$-invariant generating subset of $X_n$ 
then $|\Lambda| \ge p^{2r-1}$ for any $r \ge 1$.
\end{prop}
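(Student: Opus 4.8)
The plan is to prove the contrapositive in spirit: analyze an arbitrary $P_n$-invariant generating set $\Lambda \subset X_n$ and show it cannot be too small. The natural first move is to work with $P_n$-orbits. Since $\Lambda$ is $P_n$-invariant, it is a union of orbits, so it suffices to understand the possible sizes of $P_n$-orbits in $X_n$ and which unions of orbits can generate $X_n$. The key structural input is the inductive wreath-product description $P_{p^r} \cong P_{p^{r-1}} \wr \bbZ/p = (P_{p^{r-1}})^p \rtimes \bbZ/p$, together with the ``big block'' decomposition of $\{1,\dots,p^r\}$ into $B_1,\dots,B_p$ already introduced in Section~\ref{sect.upperbound}: the base group $(P_{p^{r-1}})^p$ permutes indices within each block, while the top $\bbZ/p$ cyclically permutes the blocks. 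I expect the argument to proceed by induction on $r$, with the case $r = 1$ (where $n = p$, $P_p = \bbZ/p$, and one checks directly that any $\bbZ/p$-invariant generating set of $X_p \cong (\bbZ/q\bbZ)^{p-1}$ has at least $p = p^{2\cdot 1 - 1}$ elements) as the base.

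The heart of the argument will be a counting/covering estimate. Given a generating $P_n$-invariant $\Lambda$, consider the ``block type'' of an element ${\bf a} = (a_1,\dots,a_n) \in X_n$: record, for each ordered pair of blocks $(B_i, B_j)$, the restriction data relating the coordinates in $B_i$ and $B_j$, or more crudely just the pattern of which blocks have nonzero coordinate-sum. Because $\Lambda$ must generate $X_n$, it must in particular generate ``enough'' elements supported across blocks in all the ways the $\bbZ/p$-rotation sees; this should force $\Lambda$ to meet many distinct $\bbZ/p$-rotation classes. Simultaneously, projecting or restricting to a single block $B_i \cong \{1,\dots,p^{r-1}\}$ should produce, after a suitable quotient, a $P_{p^{r-1}}$-invariant generating set of $X_{p^{r-1}}$ (or something comparable), to which the inductive hypothesis applies giving at least $p^{2(r-1)-1}$ elements there. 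Multiplying the ``within-block'' count $p^{2r-3}$ by the number of blocks $p$ and by a factor of $p$ coming from the rotation action should yield $p \cdot p \cdot p^{2r-3} = p^{2r-1}$. Making the ``restrict to a block'' and ``count rotation classes'' steps interact cleanly — i.e., ensuring the elements counted in different blocks and different rotation classes are genuinely distinct and that the inductive hypothesis really does apply to the restricted configuration — is where the care is needed.

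\textbf{Main obstacle.} The difficulty is precisely in setting up the inductive step so that the restriction of $\Lambda$ to a block, or to a $\bbZ/p$-fixed sublattice, is again a \emph{generating} set for the relevant smaller lattice rather than merely a subset; the condition $a_1 + \dots + a_n = 0$ couples the blocks and can be destroyed by naive restriction, and the rotation by $\bbZ/p$ can identify elements one would like to count separately. I would handle this by choosing the right intermediate sublattice: inside $X_n$ sit the copies coming from each block pair, and one wants a $P_n$-equivariant surjection $X_n \twoheadrightarrow X_{p^{r-1}}^{\oplus(\text{something})}$ or a filtration whose graded pieces are controlled, so that a generating set upstairs maps to generating sets downstairs with multiplicities adding correctly. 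An alternative, possibly cleaner route is to avoid induction and instead argue directly: show that for \emph{each} of the $p^{2r-1}$ basis-type elements ${\bf a}_{\alpha,\beta}$ with $\beta$ in the block following $\alpha$'s block, $\Lambda$ must contain an element in its $P_n$-orbit and that these orbits, intersected with $\Lambda$, contribute disjointly — essentially reverse-engineering the upper-bound construction of Section~\ref{sect.upperbound} and proving its optimality via a weight-function or linear-algebra obstruction over $\bbZ/q\bbZ$. Either way, the combinatorics of how the wreath product mixes the ``horizontal'' (within-block) and ``vertical'' (between-block) degrees of freedom is the crux, and I expect that is where the bulk of the work lies.
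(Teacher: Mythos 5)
There is a genuine gap: your write-up is a plan that correctly guesses the outer shell (induction on $r$, base case $r=1$, exploit the block structure of $P_{p^r}$) but does not supply the mechanism that makes the inductive step work, and you explicitly acknowledge that you do not know how to make your proposed mechanism work. The route you sketch — restrict $\Lambda$ to the big blocks $B_1,\dots,B_p$ of length $p^{r-1}$ and multiply a within-block count by $p$ blocks and $p$ rotation classes — runs into exactly the problems you name: the zero-sum condition couples the blocks so the restriction of $X_{p^r}$ to one big block is $(\bbZ/q\bbZ)^{p^{r-1}}$ rather than $X_{p^{r-1}}$, a single element of $\Lambda$ contributes to all $p$ blocks at once so the factors $p\cdot p\cdot p^{2r-3}$ do not correspond to any disjoint count, and nothing guarantees the restricted set generates. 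Neither of your fallback suggestions (``choose the right intermediate sublattice'', ``reverse-engineer the upper bound'') is carried out.

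The paper's inductive step goes through the \emph{bottom} of the wreath product rather than the top: subdivide $\{1,\dots,p^r\}$ into $p^{r-1}$ \emph{small} blocks of length $p$ and define the $P_{p^{r-1}}$-equivariant surjection $\Sigma\colon X_{p^r}\to X_{p^{r-1}}$ by summing the coordinates in each small block. Then $\Sigma(\Lambda)$ generates $X_{p^{r-1}}$, and by Nakayama's lemma one may discard the elements of $\Sigma(\Lambda)$ lying in $pX_{p^{r-1}}$ and still have a ($P_{p^{r-1}}$-invariant) generating set, so induction gives $|\Sigma(\Lambda)\setminus pX_{p^{r-1}}|\ge p^{2r-3}$. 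The decisive point, which is absent from your plan, is the fiber bound: any ${\bf s}\in\Sigma(\Lambda)\setminus pX_{p^{r-1}}$ has at least \emph{two} coordinates prime to $p$ (it has at least one by construction, and cannot have exactly one because its coordinates sum to zero), so any preimage ${\bf a}\in\Lambda$ is non-scalar in two small blocks $b_i,b_j$, and the $p^2$ elements $\sigma_i^{\alpha}\sigma_j^{\beta}({\bf a})$ are distinct elements of $\Lambda$ in the same fiber; hence $|\Lambda|\ge p^2\cdot p^{2r-3}$. Finally, your base case is also not as routine as you suggest: for $p=q=2$ the group $P_2$ acts trivially on $X_2\cong\bbZ/2\bbZ$ and the bound fails, which is precisely why the hypothesis $q=p^e$ with $e\ge 2$ when $p=2$ (condition~\eqref{e.q}) is needed there.
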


Our proof of Proposition~\ref{prop.lower-bound} will rely 
on the following special case of Nakayama's 
Lemma~\cite[Proposition 2.8]{am}.

\begin{lem} \label{lem.lin-algebra}
Let $q = p^e$ be a prime power, $M = (\bbZ/q \bbZ)^d$ and
$\Lambda$ be a generating subset of $M$ (as an abelian group).
If we remove from $\Lambda$ all elements that lie in $pM$,
the remaining set, $\Lambda \setminus pM$, will still generate $M$. 
\qed
\end{lem}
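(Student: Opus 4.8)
The statement is precisely Nakayama's Lemma applied to the local ring $\bbZ/q\bbZ$, whose unique maximal ideal is generated by $p$ since $q = p^e$; I would give the short direct argument rather than invoke the general form. Because the $\bbZ$-module structure on $M$ factors through $\bbZ/q\bbZ$, a subset generates $M$ as an abelian group precisely when it generates $M$ as a $\bbZ/q\bbZ$-module, so I will not distinguish the two. Write $M' \subseteq M$ for the subgroup generated by $\Lambda \setminus pM$; the goal is to show $M' = M$.

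First I would verify that $M = M' + pM$. Partitioning $\Lambda = (\Lambda \setminus pM) \sqcup (\Lambda \cap pM)$, the first piece generates $M'$ and the second piece is contained in $pM$, so $M = \langle \Lambda \rangle \subseteq M' + pM$; the reverse inclusion is trivial.

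Then I would iterate this identity: substituting $M = M' + pM$ into the occurrence of $M$ inside its own right-hand side gives $M = M' + p(M' + pM) = M' + p^2 M$, and after $e$ such substitutions $M = M' + p^e M$. Since $p^e M = qM = 0$, this forces $M = M'$, which is what we wanted. (Alternatively one may quote \cite[Proposition 2.8]{am} directly: the images of $\Lambda \setminus pM$ in the $\mathbb{F}_p$-vector space $M/pM$ are exactly the nonzero images of the elements of $\Lambda$, and these span $M/pM$ because $\Lambda$ generates $M$; Nakayama then yields $M' = M$.) There is no real obstacle here — the only feature of the hypothesis that is used is that $q$ is a prime power, so that the chain $M = M' + p^i M$ terminates at $p^e M = 0$; for composite $q$ the assertion can fail.
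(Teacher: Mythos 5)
Your argument is correct. Note that the paper itself supplies no proof at all: it simply states the lemma as a special case of Nakayama's Lemma, citing \cite[Proposition 2.8]{am}, which is exactly the route you sketch in your parenthetical remark (pass to the $\mathbb{F}_p$-vector space $M/pM$, where the images of $\Lambda \setminus pM$ are the nonzero images of $\Lambda$ and hence span, then apply Nakayama over the local ring $\bbZ/q\bbZ$). Your main argument is a self-contained alternative: the identity $M = M' + pM$ iterated $e$ times gives $M = M' + p^eM = M'$, which avoids quoting any external result and makes transparent the only place the hypothesis $q = p^e$ is used — indeed your closing observation that the statement fails for $q$ with two distinct prime factors (so that $\bbZ/q\bbZ$ is not local) is a worthwhile sanity check. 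Either route is perfectly adequate here; the elementary iteration costs three lines and keeps the paper self-contained, while the citation is shorter and is what the authors chose.
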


\begin{proof}[Proof of Proposition~\ref{prop.lower-bound}] 
We argue by induction on $r$. For the base case, set $r = 1$.  We need 
to show that $|\Lambda| \ge p$. Assume the contrary.  In this case 
$P_n$ is a cyclic $p$-group, and every non-trivial orbit of $P_n$ 
has exactly $p$ elements. Hence, $|\Lambda| < p$ is only possible if
every element of $\Lambda$ is fixed by $P_n$. Since we are assuming that  
$\Lambda$ generates $X_n$ as an abelian group, we conclude that $P_n$
acts trivially on $X_n$. This can happen only if $p = q = 2$. Since
these values are ruled out by our definition~\eqref{e.q} of $q$,
we have proved the proposition for $r = 1$.

In the previous section we subdivided
the integers $1, 2, \dots, p^r$ into $p$ ``big blocks" $B_1, \dots, B_p^{r-1}$
of length $p$.  Now we will now work with ``small blocks"
$b_1, \dots, b_{p^{r-1}}$, where $b_j$ consists of the $p$ 
consecutive integers \[ (j-1)p + 1, (j-1)p + 2, \dots, jp \, . \] 
We can identify $P_{p^{r-1}}$ with the subgroup of $P_{p^r}$ 
that permutes the small blocks $b_1, \dots, b_{p^{r-1}}$ without 
changing the order of the elements in each block.

For the induction step, assume $r \ge 2$ and consider the homomorphism
$\Sigma \colon X_{p^r} \to X_{p^{r-1}}$ given by 
\begin{equation} \label{e.Sigma}
{\bf a} = (a_1, a_2, \dots, a_{p^r}) \mapsto 
{\bf s} = (s_1, \dots, s_{p^{r-1}}) \, ,  
\end{equation}
where 
$s_i = a_{(i-1)p + 1} + a_{(i-1)p + 2} + \ldots + a_{ip}$ 
is the sum of the entries of ${\bf a}$ in the $i$th small block $b_i$.
Thus

\smallskip
(i) if $\Lambda$ generates $X_{p^r}$ then $\Sigma(\Lambda)$ generates
$X_{p^{r-1}}$.

\smallskip
(ii) if $\Lambda$ is a $P_{p^r}$-invariant subset of $X_{p^r}$ then 
$\Sigma(\Lambda)$ is a $P_{p^{r-1}}$-invariant subset of $X_{p^{r-1}}$.

\smallskip
Let us remove from $\Sigma(\Lambda)$ all elements 
which lie in $p X_{p^{r-1}}$.
The resulting set,
$\Sigma(\Lambda) \setminus p X_{p^{r-1}}$, is
clearly $P_{p^{r-1}}$-invariant. 
By Lemma~\ref{lem.lin-algebra} this set
generates $X_{p^{r-1}}$.  Thus by the induction 
assumption $|\Sigma(\Lambda) \setminus pX_{p^{r-1}}| 
\ge p^{2r-3}$.

We claim that the fiber of each element
${\bf s} = (s_1, \dots, s_{p^{r-1}})$ in $\Sigma(\Lambda) \setminus
pX_{p^{r-1}}$
has at least $p^2$ elements in $\Lambda$. If we can show this, 
then we will be able to conclude that
\[ |\Lambda| \ge p^2 \cdot | \Sigma(\Lambda) 
\setminus pX_{p^{r-1}}| \ge p^2 \cdot p^{2r-3}= p^{2r -1} \, , \]
thus completing the proof of Proposition~\ref{prop.lower-bound}.

Let $\sigma_i$ be the single $p$-cycle, cyclically permuting
the elements in the small block $b_i$.
To prove the claim, note that the subgroup  
\[ \langle \sigma_i \, | \,  i = 1, \dots, p^{r-1} \rangle 
\simeq (\bbZ/p \bbZ)^{p^{r-1}} \]
of $P_n$ acts on each fiber of $\Sigma$. 

To simplify the exposition in the argument to follow, we
introduce the following bit of terminology.
Let us say that ${\bf a} \in (\bbZ/q \bbZ)^n$ 
is scalar in the small block $b_i$  
if all the entries of ${\bf a}$ in the block $b_i$ are the same, i.e.,
if
\[ a_{(i-1)p + 1} = a_{(i-1)p + 2} = \dots = a_{ip} \, . \]

We are now ready to prove the claim. Suppose ${\bf a} = (a_1, \dots, a_{p^r})
\in X_{p^r}$ lies in the preimage of ${\bf s} = (s_1, \dots, s_{p^{r-1}})$,
as in~\eqref{e.Sigma}. 
If ${\bf a}$ is scalar in the small block $b_i$ then clearly 
\[ s_i = a_{(i-1)p + 1} + a_{(i-1)p + 2} + \dots + a_{ip} \in 
p \bbZ/q \bbZ \, . \] 
Since we are assuming that ${\bf s}$ lies in 
\[ \Sigma(\Lambda) \setminus p X_{p^{r-1}} \, , \]
${\bf s}$ must have at least two entries that are 
not divisible by $p$, say, $s_i$ and $s_j$.
(Recall that $s_1 + \dots + s_{p^r} = 0$ in $\bbZ/q \bbZ$, so 
${\bf s}$ cannot have exactly one entry not divisible by $p$.) Thus
${\bf a}$ is non-scalar in the small blocks $b_i$ and $b_j$. 
Consequently, the elements $\sigma_i^{\alpha}
\sigma_j^{\beta}(a)$ are distinct, as $\alpha$ and $\beta$ range between
$0$ and $p-1$. All of these elements lie in the fiber of ${\bf s}$ 
under $\Sigma$. Therefore we conclude that this fiber contains 
at least $p^2$ distinct elements. This completes 
the proof of the claim and thus of Proposition~\ref{prop.lower-bound},
Proposition~\ref{prop1}(c) and Theorem~\ref{thm1}(c). 
\end{proof}

\section{Proof of Theorem~\ref{thm1} part {\rm(d)}}
\label{sect.generaln}

In this section we assume that $n$ is divisible by $p$ but 
is not a power of $p$. 
We will modify the arguments of the last two sections
to show that 
\[ \ed(T \rtimes P_n) = \ed(T \rtimes P_n; p) = p^e(n-p^e)-n+1 \, ,
 \]
where $p^e$ is the highest power of $p$ dividing $n$.
This will complete the proof of Proposition~\ref{prop1} and thus of 
Theorem~\ref{thm1}.

Write out the $p$-adic expansion 
\begin{equation} \label{e.pad}
n=n_1p^{e_1}+n_2p^{e_2}+...+n_up^{e_u},
\end{equation}
of $n$, where $1 \le e=e_1 < e_2<...<e_u$, and $1\le n_i<p$ for each $i$.
Subdivide the integers $1,...,n$ into $n_1+...+n_u$ 
blocks $B_j^i$ of length $p^{e_i}$, for $j$ ranging over $1, 2, ...,n_i$.
By our assumption there are at least two such blocks.
The Sylow subgroup $P_n$ is a direct product
\[
P_n=(P_{p^{e_1}})^{n_1}\times\cdots \times (P_{p^{e_u}})^{n_u}
\]
where each $P_{p^{e_i}}$ acts on one of the blocks $B_j^i$.

Once again we will use the strategy outlined in Section~\ref{sect1.5}.

\smallskip
Step (i): We will construct a generically free representation 
of $T \rtimes P_n$ of dimension $p^{e_1}(n- p^{e_1})$. This will 
prove the upper bound $\ed(T \rtimes P_n) \le p^{e_1}(n-p^{e_1})$.

To construct this representation,
let $\Lambda \subset X(T)$ be the union of the $P_n$-orbits of the elements
\[ 
{\bf a}_{1, j + 1} \mbox{ where } j=p^{e_1},...,n_1p^{e_1},n_1p^{e_1}+p^{e_2},.....,n-p^{e_u}
\] 
i.e., the union of the $P_n$-orbits of elements
of the form $(1, 0 \ldots, 0, -1, 0, \ldots, 0)$, where $1$
appears in the first position of the first block and 
$-1$ appears in the first position of one of the other blocks.
For ${\bf a}_{\alpha, \beta}$ in $\Lambda$ there are $p^{e_1}$ 
choices for $\alpha$ and 
$n-p^{e_1}$ choices for $\beta$. 
Thus \[ \dim(V_{\Lambda}) = | \Lambda | = p^{e_1}(n-p^{e_1}) \, . \]
It is not difficult to see that $\Lambda$ generates $X(T)$ 
as an abelian group. To conclude with Lemma~\ref{lem.gen-free} 
that $V_{\Lambda}$ is a generically free representation of
$T \rtimes P_n$, it remains to show that the $P_n$ action on the kernel
of the natural morphism $\phi \colon \bbZ[\Lambda] \to X(T)$ 
is faithful when $e_1\ge 1$.
As in section~\ref{sect.upperbound} we only need to check that the center $Z_n$ of $P_n$ acts faithfully on the kernel.
Let $\sigma$ be a non trivial element of $Z_n=(Z_{p^{e_1}})^{n_1}\times\cdots\times(Z_{p^{e_u}})^{n_u}$. 
We may assume that the first component of $\sigma$
in the above direct product is non-trivial, 
and therefore $\sigma$
permutes elements in the first block $B_1^1$ cyclically. Note that
$B_1^1$ is of size at least $p$ as $e=e_1\ge 1$, and that we have 
at least $2$ blocks. The second block is also of 
size $\ge p$ and if $p=2$, at least of size $4$ by \eqref{e.pad}.
It follows from this that $\sigma$ does not fix the non-zero element
\[
{\bf a}_{1, p^e + 1} - {\bf a}_{1, p^e + 2} +
{\bf a}_{2, p^e + 2} - {\bf a}_{2, p^e + 1} 
\]
which lies in the kernel of $\phi$.

\smallskip
Step (ii): We now want to prove the lower bound, 
$\ed(T \rtimes P_n; p) \ge p^{e_1}(n - p^{e_1}) - n + 1$.
Arguing as in Section~\ref{section.lowerbound} (and using 
the same notation, with $q = p$), it suffices to show that 
$\ed(T_{(p)} \rtimes P_n; p) \ge p^{e_1}(n - p^{e_1})$.
By the Karpenko-Merkurjev theorem~\ref{thm.km} this is equivalent 
to showing that every faithful representation of 
$T_{(p)} \rtimes P_n$ has dimension $\ge p^{e_1}(n - p^{e_1})$.
By Lemma~\ref{lem.characters} it now suffices to prove 
the following lemma.

\begin{lem} \label{lem.d(ii)} 
Let $n$ be a positive integer, 
$P_n$ be the Sylow subgroup of $\Sym_n$,
$p^e$ be the highest power of $p$ dividing $n$, and  
\[ \text{$X_n := \{ (a_1, \dots, a_n) \in (\bbZ/p\bbZ)^n\, | \, 
a_1 + \dots + a_n = 0$ in $\bbZ/p \bbZ \; \}$.}  \]
Then every $P_n$-invariant generating subset of $X_n$ 
has at least $p^e(n-p^e)$ elements.
\end{lem}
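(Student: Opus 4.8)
The plan is to adapt the block-induction argument used in the proof of Proposition~\ref{prop.lower-bound}, but now working over $\bbZ/p\bbZ$ (so $q = p$) and over a general $n$ divisible by $p$. The key structural fact is that $P_n$ is a direct product of wreath-product Sylow subgroups, one for each block $B_j^i$ of size $p^{e_i}$ in the $p$-adic expansion~\eqref{e.pad}. The first block $B_1^1$ has size $p^{e_1} = p^e$, and the remaining $n - p^e$ coordinates are distributed among the other blocks. The target bound $p^e(n-p^e)$ is exactly ``(size of the first block)$\times$(number of remaining coordinates)'', which strongly suggests that the mechanism producing large orbits is: a weight which is non-scalar on the first block gets a $P_{p^e}$-orbit of size $\ge p^e$ (in fact the full cyclic group $\bbZ/p \subset Z_{p^e}$ already moves it $p$ ways, but one wants the full transitive action of $P_{p^e}$ on the $p^e$ positions), and we need enough such weights to force the count.

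First I would reduce, exactly as in Section~\ref{section.lowerbound}, to the combinatorial statement of Lemma~\ref{lem.d(ii)}: every $P_n$-invariant generating subset $\Lambda$ of $X_n$ has $|\Lambda| \ge p^e(n - p^e)$. Next I would set up a ``collapsing'' homomorphism analogous to $\Sigma$ in~\eqref{e.Sigma}: map $X_n \to X_{n/p^{e}}$ (or more naturally, contract each block $B_1^1$ to a single coordinate while contracting the other blocks appropriately) by summing the entries of ${\bf a}$ over suitable sub-blocks, chosen so that the image lattice is again of the form $X_{n'}$ with $n'$ having one fewer ``unit'' of the first $p$-adic digit, and so that $P_n$-invariance descends to $P_{n'}$-invariance. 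Applying Nakayama's Lemma~\ref{lem.lin-algebra} (remove the image elements lying in $pX_{n'}$, the rest still generate), together with an induction hypothesis on $n$, would give a lower bound on the number of ``surviving'' image elements. Then I would bound the fiber size: a weight in $\Lambda$ lying over an element of $\Sigma(\Lambda) \setminus p X_{n'}$ must be non-scalar on the first block $B_1^1$, and the action of $P_{p^e}$ (acting transitively on the $p^e$ positions of $B_1^1$) produces at least $p^e$ distinct preimages — or more precisely, one combines this with a second non-scalar block to reach the exact multiplier. Multiplying the fiber bound by the surviving-element count should yield $|\Lambda| \ge p^e(n-p^e)$.

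The main obstacle I anticipate is getting the bookkeeping of the induction to close with the \emph{exact} constant $p^e(n - p^e)$ rather than something slightly weaker. Unlike the $n = p^r$ case, where every block has the same size and the recursion $p^2 \cdot p^{2r-3} = p^{2r-1}$ is clean, here the blocks have varying sizes $p^{e_i}$ and varying multiplicities $n_i$, so the collapsing map must be chosen with care: it should reduce $n$ to some $n'$ for which $p^{e'}(n' - p^{e'})$ times the fiber multiplier reproduces $p^e(n - p^e)$. One natural choice is to peel off one coordinate's worth from the first block (reducing $n$ by, say, $p^e$ via contracting $B_1^1$ to a point and renormalizing), tracking how the highest power of $p$ dividing the new $n'$ behaves — this is the delicate point, since $p^e \mid n$ but possibly $p^{e+1} \mid (n - p^e)$ or not, changing the exponent in the inductive bound. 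I would likely need to run the induction on the quantity $n - p^e$ (the number of non-first-block coordinates) rather than on $n$ itself, and handle the base case $n = 2p^e$ (two blocks of equal size, or one of size $p^e$ and one larger) directly by the orbit argument. The faithfulness-type subtleties from the $q=2$ exclusion in~\eqref{e.q} do not arise here since we are over $\bbZ/p\bbZ$ throughout and the relevant cyclic groups act transitively on blocks of size $\ge p$.
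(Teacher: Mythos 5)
Your general framework (a collapsing homomorphism, Nakayama's Lemma~\ref{lem.lin-algebra}, a fiber count, and an induction) is the right one, but the concrete induction you propose does not close, and the difficulty is exactly the one you flag yourself. If you contract the big block $B_1^1$ of size $p^e$ to a single coordinate, the target is $X_{n'}$ with $n' = n - p^e + 1 \equiv 1 \pmod p$; the residual symmetry group is only $\{1\}\times(P_{p^{e_1}})^{n_1-1}\times\cdots\times(P_{p^{e_u}})^{n_u}$, which is a proper subgroup of the Sylow $p$-subgroup of $\Sym_{n'}$, so the inductive hypothesis (which requires invariance under the \emph{full} Sylow subgroup) cannot be invoked. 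Worse, your fiber bound fails: a weight mapping to a nonzero element of $X_{n'}$ need not be non-scalar on $B_1^1$ --- it can vanish identically there --- so its fiber can consist of a single element, and the multiplier $p^e$ is not available. Inducting on $n-p^e$ runs into the same two obstructions.

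The paper's resolution is to collapse uniformly by $p$ rather than by $p^e$: the map $\Sigma\colon X_n \to X_{n/p}$ of~\eqref{e.Sigma} sums the entries over consecutive small blocks of length exactly $p$. This sends the $p$-adic expansion $n = \sum n_i p^{e_i}$ to $n/p = \sum n_i p^{e_i-1}$, so the image of $P_n$ is precisely the Sylow $p$-subgroup of $\Sym_{n/p}$ and the exponent drops from $e$ to $e-1$; one then inducts on $e$, with base case $e=0$ where the bound $p^0(n-p^0)=n-1$ is just the rank of $X_n$. Since $q=p$ here, $pX_{n/p}=\{0\}$ and the Nakayama step only discards the zero image; any surviving image has at least two entries that are nonzero in $\bbZ/p\bbZ$ (their total sum is zero), hence the preimage is non-scalar in two small blocks and the commuting $p$-cycles $\sigma_i,\sigma_j$ give a fiber of size $\ge p^2$. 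The arithmetic $p^2\cdot p^{e-1}\bigl(n/p - p^{e-1}\bigr) = p^e(n-p^e)$ then closes exactly, with no case analysis on how $p$ divides $n-p^e$.
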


In the statement of the lemma we allow $e = 0$, to facilitate 
the induction argument.  For the purpose 
of proving the lower bound in Proposition~\ref{prop1}(d)
we only need this lemma for $e \ge 1$.

\begin{proof}
Once again, we consider the $p$-adic expansion \eqref{e.pad} of $n$, with 
$0 \le e_1<e_2<...<e_u$ and  $1\le n_i<p$. We may assume that
$n$ is not a power of $p$, since otherwise the lemma is vacuous.

We will argue by induction on $e = e_1$. 
For the base case, let $e_1=0$. Here the lemma is obvious: 
since $X_n$ has rank $n - 1$, every generating set ($P_n$-invariant 
or not) has to have at least $n - 1$ elements.

For the induction step, we may suppose $e = e_1 \ge 1$; in particular,
$n$ is divisible by $p$. 
Define $\Sigma: X_n \rightarrow X_{n/p}$ 
by sending $(a_1,....,a_n)$ to 
$(s_1,...,s_{n/p})$, where 
\[ s_j=a_{(j-1)p + 1}  + \dots + a_{jp} \]
for $j = 1, \dots, n/p$.  Arguing as in Section~\ref{section.lowerbound} 
we see that $\Sigma(\Lambda) \setminus p X_{n/p}$ is
a $(P_{p^{e_1-1}})^{n_1} \times \dots \times (P_{p^{e_u -1}})^{n_u}$-invariant
generating subset of $X_{n/p}$ and that every
\[ {\bf s} \in \Sigma(\Lambda) \setminus p X_{n/p} \]
has at least $p^2$ preimages in $\Lambda$. By the induction assumption,
\[ | \Sigma(\Lambda) \setminus p X_{n/p} | \ge p^{e-1}(\frac{n}{p} - p^{e - 1})
\]
and thus
\[
| \Lambda | \ge p^2 \cdot p^{e-1}(\frac{n}{p} - p^{e - 1}) = p^{e}(n - p^{e})
\]
This completes the proof of Lemma~\ref{lem.d(ii)} and thus of
parts (d) of Proposition~\ref{prop1} and of Theorem~\ref{thm1}.
\end{proof}

\end{document}